\newtheorem{theor}{Theorem}[section]
\newtheorem{theo}[theor]{Theorem}
\newtheorem{lemma}[theor]{Lemma}
\newtheorem{fact}[theor]{Fact}
\newtheorem{defi}{Definition}[section]
\begin{document}

\title{Boxicity of Circular Arc Graphs}
\author {Diptendu Bhowmick
\thanks {Computer Science and Automation Department, Indian
Institute of Science, Bangalore- 560012 Email:
diptendubhowmick@gmail.com},
L. Sunil Chandran
\thanks{Computer Science and Automation Department, Indian Institute
of Science, Bangalore- 560012 Email: sunil@csa.iisc.ernet.in} }
\date{}
\maketitle

\begin{abstract}
A $k$-dimensional box is the cartesian product $R_1 \times R_2
\times \cdots \times R_k$ where each $R_i$ is a closed interval on
the real line. The {\it boxicity} of a graph $G$, denoted as
$box(G)$, is the minimum integer $k$ such that $G$ can be represented as the
intersection graph of a collection of $k$-dimensional boxes: that is
two vertices are adjacent if and only if their corresponding boxes
intersect. A circular arc graph is a graph that can be represented
as the intersection graph of arcs on a circle.\\
  Let $G$ be a circular arc graph with maximum degree $\Delta$.
We show that if $\Delta <\left \lfloor \frac{n(\alpha-1)}{2\alpha}\right \rfloor$, $\alpha \in \mathbb{N}$, $\alpha
\geq 2$ then $box(G) \leq \alpha$. We also demonstrate a graph with boxicity $> \alpha$ but with $\Delta=n\frac{(\alpha-1)}{2\alpha}+\frac{n}{2\alpha(\alpha+1)}+(\alpha+2)$. So the result cannot be improved substantially when $\alpha$ is large.\\
Let $r_{inf}$ be minimum number of arcs passing through any point on the circle with respect to some circular arc representation of $G$. We also show that for any circular arc
graph $G$, $box(G) \leq r_{inf} + 1$ and this bound is tight.\\
Given a family of arcs $F$ on the circle, the circular cover number $L(F)$ is the cardinality of the smallest subset $F'$ of $F$ such that the arcs in $F'$ can cover the circle. Maximum circular cover number $L_{max}(G)$ is defined as the maximum value of $L(F)$ obtained over all possible family of arcs $F$ that can represent $G$. We will show that if $G$ is a circular arc graph with $L_{max}(G)> 4$ then $box(G) \leq
3$.\\

\noindent {\bf Key words:} Boxicity, circular arc graph.
\end{abstract}

\section{Introduction}

Let $\mathcal{F}$ be a family of non-empty sets. An undirected graph
$G$ is the intersection graph of $\mathcal{F}$ if there exists a one-one correspondence between the vertices of $G$ and the sets in $\mathcal{F}$ such that two vertices in $G$ are adjacent if and only
if the corresponding sets have non-empty intersection. If
$\mathcal{F}$ is a family of intervals on the real line, then $G$ is
called an {\it interval graph}.

A $k$-dimensional box or $k$-box is the cartesian product $R_1
\times R_2 \times \cdots \times R_k$, where each $R_i$ is a closed
interval on the real line. The boxicity of a graph $G$ is defined to
be the minimum integer $k$ such that $G$ is the intersection graph
of a collection of $k$-boxes. Since $1$-boxes are nothing but closed
intervals on the real line, interval graphs are the graphs having
boxicity $1$.

The concept of boxicity was introduced by F. S. Roberts
\cite{Roberts} in 1969. Boxicity finds applications in fields such
as ecology and operations research: It is used as a measure of the complexity of ecological $\cite{Roberts1}$ and social $\cite{freeman}$ networks and has applications in fleet maintenance $\cite{opsut}$. Boxicity has been investigated for various classes of graphs $\cite{Scheiner}\cite{Roberts2}\cite{Thomassen}$ and has been related with other parameters such as treewidth $\cite{treewidth}$ and vertex cover $\cite{vertexcover}$. Computing the boxicity of a
graph was shown to be NP-hard by Cozzens \cite{Cozzens}. This was
later strengthened by Yannakakis \cite{Yan1}, and finally by
Kratochvil \cite{Kratochvil} who showed that deciding whether
boxicity of a graph is at most two itself is NP-complete. Recently Chandran et al $\cite{maxdegree}$ showed that for any graph $G$, $box(G) \leq \chi(G^2)$ where $G^2$ is the square of graph $G$ and $\chi$ is the chromatic number of a graph. From this they inferred that $box(G) \leq 2\Delta^2+2$, where $\Delta$ is the maximum degree of $G$. Very recently this result was improved by Esperet $\cite{esperet}$, who showed that $box(G) \leq \Delta^2+2$. In $\cite{geometric}$ Chandran et al have shown that $box(G) \leq \left \lceil(\Delta+2)\log n \right \rceil$ where n is the number of vertices in $G$.\\

\noindent{}\textbf{Remark on previous approach to boxicity of circular arc graphs:}\\
In $\cite{treewidth}$ it has been shown that for any graph $G$, $box(G) \leq treewidth(G)+2$. If $G$ is circular arc graph then $treewidth(G) \leq 2\omega(G) -1$. It follows that $box(G) \leq 2\omega(G)+1$. But the results shown in this paper are much stronger.      

\subsection{Our results}

A graph $G$ is called a {\it circular arc} graph if it is the
intersection graph of a collection of arcs on the circle that is, 
each vertex of this graph can be associated to an arc on a circle
such that two vertices are adjacent if and only if the arcs
corresponding to these vertices intersect. Circular
arc graphs were first discussed as a natural generalization of
interval graphs and they have since been studied extensively. See
Golumbic\cite{Golu} for a brief introduction and references on
circular arc graphs. It is easy to see that a {\it circular arc}
representation of an undirected graph $G$ which fails to cover some
point $P$ on the circle will be topologically the same as an
interval representation of $G$.

Since the definitions of interval graphs and circular arc graphs are
quite similar, one may tend to conjecture that the boxicity of
circular arc graphs may not be very high. But it comes as a surprise
that the graph that achieves the highest boxicity belongs to the
class of circular arc graphs! It was shown by Roberts
$\cite{Roberts}$ in his pioneering paper that the highest possible
value of boxicity namely $\frac{n}{2}$ (Let n be an even number) was achieved by
$\overline{(\frac{n}{2})K_2}$, the complement of a perfect matching on
n vertices. (Note that this graph is same as the complete $\frac{n}{2}$ partite graph on n vertices with each partition containing exactly two vertices). It turns out that $\overline{(\frac{n}{2})K_2}$ is a circular arc graph. A circular arc representation of $\overline{(\frac{n}{2})K_2}$ is given below. In this paper we will show that boxicity of a circular arc graph can be large only when its maximum degree is very high.\\

\noindent{\bf A circular arc representation of $\overline{(\frac{n}{2})K_2}$ :}\\
Let us take a circle. Place n points on the circle such that the distance between any adjacent pair of points is equal. Let these points be $p_0,p_1, \ldots, p_{n-1}$ in clockwise direction. Draw an arc from $p_i$ to $p_{(i+\frac{n}{2}-1) \ mod \ n}$ in clockwise direction for $0 \leq i <n$. It is easy to verify that the arc starting from $p_i$ is adjacent to all other arcs except the arc starting from $p_{(i + \frac{n}{2}) \ mod \ n }$. Thus this family of arcs gives a circular arc representation for $\overline{(\frac{n}{2})K_2}$. Note that the vertices corresponding to the arcs starting from $p_i$ and $p_{(i + \frac{n}{2}) \ mod \ n}$ belong to the same partition in $\overline{(\frac{n}{2})K_2}$ for $0 \leq i < \frac{n}{2}$, when we visualize $\overline{(\frac{n}{2})K_2}$ as a complete $\frac{n}{2}$partite graph.\\\\
Let $G$ be a circular arc graph on n vertices and with maximum
degree $\Delta$. In this paper we will show the following results:\\

\noindent{\bf Result 1.} If $\Delta$ $< \left \lfloor n(\frac{\alpha-1}{2\alpha}) \right \rfloor$ then
$box(G)$ $\leq$ $\alpha$, $\alpha \in \mathbb{N},$ $\ \alpha \geq 2$.

\noindent{\bf Remark:} We are unable to show that the range of
$\Delta$ as given by the above result, for which $box(G) \leq \alpha$, is tight. But we can construct
a circular arc graph with $\Delta=n\frac{(\alpha-1)}{2\alpha}+\frac{n}{2\alpha(\alpha+1)}+(\alpha+2)$ and
$box(G) > \alpha$. Thus the result cannot be substantially improved when $\alpha$ is large.\\

\noindent{\bf Result 2.} Given a family of arcs $F$ on the circle, $r_{inf}(F)$ is the minimum number of arcs passing through any point on the circle. Given a circular arc graph $G$, $r_{inf}(G)=min \ r_{inf}(F)$ over all possible family of arcs $F$ that can represent $G$. We will show that $box(G)$ $\leq r_{inf}(G) + 1$ and this bound is tight.\\

\noindent{\bf Result 3.} Given a family of arcs $F$ on the circle, the circular cover number $L(F)$ is the cardinality of the smallest subset $F'$ of $F$ such that the arcs in $F'$ can cover the circle. Maximum circular cover number $L_{max}(G)$ is defined as the maximum value of $L(F)$ obtained over all possible family of arcs $F$ that can represent $G$. We will show that if $G$ is a circular arc graph with $L_{max}(G)> 4$ then $box(G) \leq
3$.

\section{Preliminaries}

Let $G$ be a simple, finite, undirected graph on $n$ vertices. The
vertex set of $G$ is denoted as $V(G)$ and the edge set of $G$ is
denoted as $E(G)$. Let $G'$ be a graph such that $V(G') = V(G)$.
Then, $G'$ is a {\it super graph} of $G$ if $E(G) \subseteq E(G')$.
We define the {\it intersection} of two graphs as follows: if $G_1$
and $G_2$ are two graphs such that $V(G_1) = V(G_2)$, then the
intersection of $G_1$ and $G_2$ denoted as $G = G_1 \cap G_2$ is a
graph with $V(G) = V(G_1) = V(G_2)$ and $E(G) = E(G_1) \cap E(G_2)$.

Let $G$ be a graph. Let $I_1, I_2, \ldots, I_k$ be $k$ interval
graphs such that $G = I_1 \cap I_2 \cap \cdots \cap I_k$. Then $I_1,
I_2, \ldots, I_k$ is called an {\it interval graph representation}
of $G$. The following equivalence is well known.

\begin{lemma}\textbf{$($Roberts\cite{Roberts}$)$} 
The minimum $k$ such that there exists an interval graph
representation of $G$ using $k$ interval graphs $I_1, I_2, \ldots,
I_k$ is the same as $box(G)$.
\end{lemma}

\begin{defi}\label{Overlap set}
Given a family F of arcs on a circle, the $\textbf{overlap set}$ of a point P on the circle is the set of all arcs that contains the given
point P and is denoted by $\mathcal{O}(P)$. An overlap set
with the smallest $($respectively largest$)$ number of arcs in it is called a $\textbf{minimum}$ $($respectively \textbf{maximum}$)$
$\textbf{overlap set}$ and its cardinality is denoted by $r_{inf}(F)$ $($respectively $r_{sup}(F))$.
\end{defi}

It is easy to see that the arcs in an overlap set induce a clique in
the corresponding circular arc graph.

\begin{defi}
 For a circular arc graph $G$,  $r_{inf}(G)$ is defined as the minimum value of $r_{inf}(F)$ obtained over all possible family of arcs $F$ that can represent $G$.  
\end{defi}

\begin{defi}\label{Minimum overlap point}
A point $P$ is called a $\textbf{minimum overlap point}$ if
$|\mathcal{O}(P)|=r_{inf}(F)$. Note that there may be many minimum
overlap points. Let $P_{inf}$ denote one of the minimum overlap
points. All through the paper we shall use this special minimum
overlap point as a reference point.
\end{defi}

\begin{defi}
Each arc has two endpoints. The $\textbf{left endpoint} \ $
$l(u)$ $($respectively $\textbf{right endpoint}$ $r(u))$ of arc u is the
first endpoint of u encountered in an anticlockwise $($respectively
clockwise$)$ traversal from any interior point of u. $($The circle
itself is not considered as an arc. A single point is also not
considered as an arc. Thus the definition makes sense and every arc
has a distinct left as well as a right end point. Note that these
assumptions can be made without any loss of generality.$)$
\end{defi}

\begin{defi}\label{Clockwise and anticlockwise adjacency}
An arc v is said to be $\textbf{clockwise adjacent}$ to an arc u if
v belongs to the overlap set of $\ l(u)$. An arc v is said to be
$\textbf{anticlockwise adjacent}$ to an arc u if v belongs to the
overlap set of $\ r(u)$.
\end{defi}

\begin{defi}
The $\textbf{start angle}$ $s(u) \ ($respectively $\textbf{end angle}$
$e(u))$ of arc u is the angle measured in clockwise direction from the
line joining $l(u) \ ($respectively $r(u))$ and center of the circle to the
positive x-axis assuming that the center is at the origin.
\end{defi}

\begin{defi}\label{clockwise ordering}
Consider the set of end points $\{l(u), r(u): u \in V \}$. A
$\textbf{clockwise}$ $\textbf{ordering}$ $\sigma$ of the endpoints
is obtained, if we traverse the circle in the clockwise direction
starting from the point $P_{inf}$ and list out the endpoints in the
order in which they are seen, resolving any conflict arbitrarily. For a circular arc graph $G$ with a given circular arc representation, a clockwise ordering of the vertices can be obtained from the ordering of the left endpoints of their corresponding arcs in a similar way. In this paper we shall use $v_i$ to denote the $i$th vertex with respect to the clockwise ordering.
\end{defi}
\begin{defi}
Let $Int(\textbf{u})$ denote the set of half axes intersecting the arc \textbf{u}.
Then $|Int(\textbf{u})|$ is said to be the $\mathbf{interception \ number}$
of arc \textbf{u}.
\end{defi}
Let $G$(V,E) be a circular arc graph with circular arc representation
$\mathcal{C}_0$. From now on, the arc corresponding to vertex u will be denoted as $\textbf{u}$. We assume $n \geq 2$ and $\Delta \geq 1$. Let us take a circle of unit radius centered at origin on the cartesian plane.
Given a positive integer $\alpha \geq 2$, we
define $\alpha$ reference axes as follows. The first reference axis $A_0$ is the same
as the x-axis. For $1 \leq j < \alpha$ the $j$th reference axis $A_j$ is the
line passing through the origin and making an angle $\frac{\pi j}{\alpha}$ radian with the positive x-axis in anticlockwise direction. For the convenience of writing, we may sometimes use the notation $A_j$ even when $j \geq \alpha$: In that case $A_j$ denotes the axis $A_{j \ \mod \ \alpha}$. Also sometimes we use the notation $A_{-i}$ to denote $A_{\alpha-i}$. (We will refer to this set of reference axes as the system of reference axes based on $\alpha$).\\\\
Origin divides each reference axis into two halves. Each of them is called a half-axis. Clearly these $2\alpha$ half-axes
partition the interior of the circle into 2$\alpha$ regions - each region is
called a sector. Note that each sector defines an arc on the circle.
In this paper we will use the word sector to denote the arc defined
by a sector also. An $\textbf{anticlockwise}$ $\textbf{ordering}$ of the half-axes
is obtained, if we traverse the circle in anticlockwise direction starting from the positive x-axis. We shall use the notation $H_j$ to
denote the $j$th half-axis in this ordering where $0 \leq j <2\alpha$. It may be noted that the half axes obtained from the axis $A_j$ will get numbered $H_j$ and $H_{j+\alpha}$ according to the above mentioned numbering scheme. Sometimes we will refer $H_j$ as the positive half axis of $A_j$.\\
 The sector in-between the half-axes $H_j$ and $H_{(j+1) \ mod \ 2\alpha}$ is referred to as the $j$th sector and is denoted as $S_j$ for $0 \leq j < 2\alpha$. (We sometimes use the notation $S_{-i}$ to denote $S_{2\alpha-i}$). $H_{j}$ and $H_{(j+1) \ mod \ 2\alpha}$ are said to be the $\textbf{right}$ and $\textbf{left half-axis}$ of $S_j$ respectively. We also define $A_{j \ mod \ \alpha}$ and $A_{(j+1) \ mod \ \alpha}$ respectively to be the $\textbf{right}$ and $\textbf{left}$ $\textbf{reference axis}$ of $S_j$. Note that $A_{j \ mod \ \alpha}$ corresponds to the half-axis $H_j$ and $A_{(j+1) \ mod \ \alpha}$ corresponds to the half-axis $H_{(j+1) \ mod \ 2\alpha}$.
\begin{lemma}$\label{distinct endpoints}$
There exists a circular arc representation $\mathcal{C}$ of $G$ which satisfies the following properties\\
\vspace{-0.6cm}
\begin{enumerate}
 \item All endpoints of arcs are distinct.
 \item Left endpoints of the arcs are uniformly placed over the perimeter of the circle i.e. $|s(\textbf{u}_{i+1})-s(\textbf{u}_i)|=$ $\frac{2\Pi}{n}$ radian where $0 \leq i <n-1$ $($Recall that $s(\textbf{u})$ is the start angle of $\textbf{u})$.
 \item Endpoints of the arcs do not lie on any reference axis.
\end{enumerate}
\end{lemma}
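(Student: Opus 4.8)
The plan is to start from the given representation $\mathcal{C}_0$ and produce $\mathcal{C}$ by a sequence of modifications, each of which leaves the intersection graph $G$ unchanged. The organizing principle I would use is that any orientation-preserving homeomorphism $\phi\colon S^1\to S^1$ of the circle sends each arc $[l(\mathbf{u}),r(\mathbf{u})]$ to the arc $[\phi(l(\mathbf{u})),\phi(r(\mathbf{u}))]$ and, being a bijection, preserves both the cyclic order of all endpoints and every pairwise intersection; hence applying such a $\phi$ to all arcs of a representation yields another representation of the same graph $G$. The three properties will be secured in order: distinctness first, then uniform placement of left endpoints, then avoidance of the reference axes.

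The first and most delicate step is to arrange that all $2n$ endpoints are distinct, since a homeomorphism cannot separate points that already coincide. I would handle each point $P$ at which several endpoints of $\mathcal{C}_0$ coincide locally, inside an open arc $(P-\delta,P+\delta)$ chosen so small that it contains no endpoint other than those sitting exactly at $P$. Every arc through $P$ is pairwise adjacent to every other such arc (they all share $P$), so the separation must keep all of these adjacencies while creating no new ones. The rule I would adopt is to push the left endpoints located at $P$ to distinct positions just anticlockwise of $P$ and the right endpoints located at $P$ to distinct positions just clockwise of $P$; then any two of the affected arcs still overlap in a neighbourhood of $P$, so all mutual adjacencies survive, and because the whole reshuffling stays inside $(P-\delta,P+\delta)$ no arc not through $P$ can acquire a new intersection. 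Carrying this out at every coincidence point gives a representation, still of $G$, in which all endpoints are distinct. This bookkeeping is where I expect the real work to lie.

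Once endpoints are distinct, the remaining properties follow cleanly. For Property~2 I list the left endpoints in their clockwise cyclic order $l(\mathbf{u}_0),\dots,l(\mathbf{u}_{n-1})$ (as in Definition~\ref{clockwise ordering}) and take the piecewise-linear orientation-preserving homeomorphism $\phi$ that maps $l(\mathbf{u}_i)$ to the uniformly spaced angle $\tfrac{2\pi i}{n}$ and interpolates monotonically between consecutive left endpoints. Applying $\phi$ to every arc preserves $G$, makes the left endpoints uniformly spaced so that $|s(\mathbf{u}_{i+1})-s(\mathbf{u}_i)|=\tfrac{2\pi}{n}$, and, since $\phi$ is injective, keeps all $2n$ endpoints distinct while sending no right endpoint onto the grid of left endpoints. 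Finally, for Property~3 I rotate the whole configuration by a generic angle $\theta$. A rotation is an isometry, hence preserves $G$ and distinctness, and it only shifts the uniform grid of left endpoints, so Properties~1 and~2 are retained. For each of the $2n$ endpoints and each of the $2\alpha$ directions determined by the system of reference axes there is exactly one value of $\theta \bmod 2\pi$ that would place that endpoint on an axis; avoiding these finitely many bad angles yields a $\theta$ for which no endpoint lies on any reference axis, completing the construction of $\mathcal{C}$.
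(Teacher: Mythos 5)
Your proof is correct and follows essentially the same three-stage route as the paper: separate coincident endpoints, redistribute endpoints in an order-preserving way so the left endpoints are uniformly spaced, then rotate to avoid the reference axes. The only difference is that you spell out the local-perturbation argument for distinctness, which the paper simply leaves to the reader, and you phrase the redistribution as an orientation-preserving homeomorphism rather than as an explicit placement of points.
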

\begin{proof}
We leave it to the reader to verify that given the circular arc representation $\mathcal{C}_0$ for $G$, there exists a circular arc representation $\mathcal{C}_1$ for $G$ such that all the endpoints of the arcs are at distinct points on the circle.\\
Let $\sigma$ be clockwise ordering of the endpoints of arcs in $\mathcal{C}_1$. Now given the circular arc representation $\mathcal{C}_1$ for $G$ and clockwise ordering $\sigma$
of the endpoints of the arcs, we shall show
that there is another circular arc representation $\mathcal{C}_2$ for G such
that $|s(\textbf{u}_{i+1})-s(\textbf{u}_i)|=$ $\frac{2\Pi}{n}$ radian for $0 \leq i < n-1$. Let $l_0,l_1, \ldots ,l_{n-1} $ be the
left endpoints of the arcs in the clockwise ordering $\sigma$. Place $l_i$ at the
point on the circle defined by the polar coordinates
$(1,-\frac{2\Pi}{n}i)$ for $0 \leq i <n$. Let $e_{i_1},e_{i_2}, \ldots ,e_{i_t}$ be
the right endpoints that are in-between $l_i$ and $l_{(i+1) \ mod \ n}$ with
respect to $\sigma$. Place $e_{i_1},e_{i_2}, \ldots e_{i_t}$ at
distinct points on the circle in that order in clockwise direction
on the arc in-between $l_i$ and $l_{(i+1) \ mod \ n}$. It is easy to verify
that $\mathcal{C}_2$ will be a valid circular arc representation of $G$.\\
Now with respect to $\mathcal{C}_2$ if some endpoint lies on some reference axis we can rotate the circular arc system on the circle by $\epsilon$ radian where $\epsilon$ is a sufficiently small positive real number, to get a circular arc representation $\mathcal{C}$ satisfying the desired properties.
\end{proof}
From now on we will use this circular arc representation $\mathcal{C}$ which satisfies the properties specified in Lemma $\ref{distinct endpoints}$. Let $\sigma$ be the clockwise ordering of the endpoints of arcs in $\mathcal{C}$.
\section{Boxicity when $\Delta < \left \lfloor n\frac{(\alpha-1)}{2\alpha} \right \rfloor$, $\alpha \in \mathbb{N}$, $\alpha \geq 2$}
In this section we will be considering circular arc graphs of maximum degree strictly less than $\left \lfloor n\frac{(\alpha-1)}{2\alpha} \right \rfloor$, $\alpha \in \mathbb{N}$, $\alpha \geq 2$. We will assume that we have a circular arc family $F$ that represents $G$ and obeys Lemma $\ref{distinct endpoints}$.\\\\
\noindent{}\textbf{Observation 1:} If $\Delta < \left \lfloor n\frac{(\alpha-1)}{2\alpha} \right \rfloor$, then each arc $u \in F$ is such that
\vspace{-0.4cm}
\begin{enumerate}
 \item length(u) $< \Pi(\frac{\alpha-1}{\alpha})$.
 \item $|Int(u)| \leq (\alpha-1)$. 
\end{enumerate}

\noindent{}\textbf{Proof of 1:} If an arc has length at least $\Pi(\frac{\alpha-1}{\alpha})$, then it will intersect the left endpoints of at least $\left \lfloor \frac{n(\alpha-1)}{2\alpha}\right \rfloor$ other arcs in view of Lemma $\ref{distinct endpoints}$. (Recall that we are considering a unit circle and therefore the distance between any pair of adjacent left endpoints is $\frac{2\Pi}{n}$).\\\\
\noindent{}\textbf{Proof of 2:} Since length(u) $< \Pi(\frac{\alpha-1}{\alpha})$, u can entirely contain at most $(\alpha-2)$ sectors. Therefore $|Int(u)| \leq (\alpha-1)$.\\  

All the definitions and Lemmas given in this section are applicable only for arcs of length $<\Pi(\frac{\alpha-1}{\alpha})$.
\begin{defi}
For every arc \textbf{u} the $\mathbf{extreme \ sectors}$ are the sectors containing the endpoints of arc \textbf{u}.
One of the extreme sectors will be called the $\mathbf{head \ sector}$ and the other will be called
the $\mathbf{tail \ sector}$ denoted by $head(\textbf{u})$ and $tail(\textbf{u})$ respectively. The sector $head(\textbf{u})$ is defined to be the extreme sector that contains $l(\textbf{u})$ and $tail(\textbf{u})$ is defined to be the extreme sector that contains $r(\textbf{u})$.
\end{defi}
With respect to a reference axis $A_j$
we define the image of a point $P$ on the circle as the point $P'$ on the circle such that
the line defined by $PP'$ is perpendicular to $A_j$. We denote $P'$
as $Im_j(P)$. When $P$ is on $A_j$ we take $Im_j(P)=P$. If X
is a set of points on the circle, we will use $Im_j(X)$ to denote
the set $\{ Im_j(P) : P \in X \}$.
\begin{lemma}
 Let X and Y be the $t$-th sector starting from $H_j$ $($i.e. the positive half axis of $A_j)$ in the clockwise and anticlockwise direction respectively. Then $Im_j(X)=Y$ and $Im_j(Y)=X$.
\end{lemma}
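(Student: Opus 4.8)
The plan is to recognize $Im_j$ as the ordinary mirror reflection of the circle across the line $A_j$, and then to track what this reflection does to the half-axes and hence to the sectors. The statement $Im_j(X)=Y$ and $Im_j(Y)=X$ will then drop out of the fact that reflection is an involution together with a one-line index computation.

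First I would argue that $Im_j$ is exactly reflection in the line $A_j$. By definition $P'=Im_j(P)$ is the point of the circle, other than $P$, for which the chord $PP'$ is perpendicular to $A_j$. Since $A_j$ passes through the centre $O$ of the circle, the foot of the perpendicular from $P$ to $A_j$ is the midpoint of the chord $PP'$, so $P'$ is the mirror image of $P$ across $A_j$; for $P$ lying on $A_j$ the stated convention $Im_j(P)=P$ agrees with reflection fixing the axis. This identification gives the two properties I will use: $Im_j$ is an involution, i.e.\ $Im_j(Im_j(P))=P$, and it maps the circle bijectively onto itself, hence carries each sector (an arc) onto another sector.

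Second I would compute the action on the half-axes. The half-axis $H_m$ lies along the direction $\frac{\pi m}{\alpha}$ (measured anticlockwise from the positive $x$-axis), and $A_j$ lies along $\frac{\pi j}{\alpha}$. Reflection in a line at angle $\phi$ sends the point at angle $\theta$ to the point at angle $2\phi-\theta$; taking $\phi=\frac{\pi j}{\alpha}$ and $\theta=\frac{\pi(j+k)}{\alpha}$ gives angle $\frac{\pi(j-k)}{\alpha}$, so $Im_j(H_{j+k})=H_{j-k}$, all indices read modulo $2\alpha$. Since $S_m$ is the arc bounded by $H_m$ and $H_{m+1}$, its image is the arc bounded by $Im_j(H_m)=H_{2j-m}$ and $Im_j(H_{m+1})=H_{2j-m-1}$, which is the sector $S_{2j-m-1}$; that is, $Im_j(S_m)=S_{2j-m-1}$.

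Third I would translate the hypotheses to this $S$-indexing. Travelling clockwise from $H_j$ the first sector met is $S_{j-1}$, so the $t$-th clockwise sector is $X=S_{j-t}$; travelling anticlockwise the first sector met is $S_j$, so the $t$-th anticlockwise sector is $Y=S_{j+t-1}$. Applying the sector formula to $Y$ yields $Im_j(Y)=S_{2j-(j+t-1)-1}=S_{j-t}=X$, and by the involution property $Im_j(X)=Y$, which is the claim. I expect the only real difficulty to be bookkeeping: pinning down the conventions so that ``the $t$-th sector clockwise (respectively anticlockwise) from $H_j$'' is correctly identified as $S_{j-t}$ (respectively $S_{j+t-1}$), and keeping every half-axis and sector index reduced modulo $2\alpha$. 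The underlying geometry, reflection in a diameter, is elementary, so once the indexing is fixed the verification is a short modular calculation.
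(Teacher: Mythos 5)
Your proof is correct. The paper itself gives no argument here (the proof is ``Left to the reader''), so there is nothing to clash with; your identification of $Im_j$ as reflection in the diameter $A_j$, the half-axis computation $Im_j(H_{j+k})=H_{j-k}$, and the resulting sector formula $Im_j(S_m)=S_{(2j-m-1) \bmod 2\alpha}$ all check out against the paper's conventions (in particular your indexing of the $t$-th clockwise and anticlockwise sectors as $S_{(j-t) \bmod 2\alpha}$ and $S_{(j+t-1) \bmod 2\alpha}$ agrees with the remark the authors place immediately after the lemma). One structural observation: you in effect prove the paper's subsequent Lemma~\ref{Image} first, directly from the reflection formula, and then obtain the present statement as a corollary via the involution property, whereas the paper proves the present lemma first and derives Lemma~\ref{Image} from it. Your order is not circular, since everything is derived from the elementary angle computation, and it has the mild advantage of making the index arithmetic explicit once rather than twice.
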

\begin{proof}
 Left to the reader.
\end{proof}
Note that the $t$th sector starting from $H_j$ for $0 \leq j < \alpha$ in the clockwise direction is $S_{(j-t) \ mod \ 2\alpha}$ whereas the $t$th sector starting from $H_j$ in the anticlockwise direction is $S_{(j+t-1) \ mod \ 2\alpha}$.
\begin{lemma}\label{Image}
 $Im_j(S_k)=S_h$ where $h=(2j-k-1) \ mod \ 2 \alpha$ for $0 \leq k <2 \alpha$ and $0 \leq j < \alpha$.
\end{lemma}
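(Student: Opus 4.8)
The plan is to recognize that $Im_j$ is nothing but reflection across the line $A_j$, and then simply to track what this reflection does to the angular boundaries of a sector.

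First I would establish the geometric fact that $Im_j$ coincides with reflection in $A_j$. Given a point $P$ on the unit circle, $Im_j(P)$ is the second intersection with the circle of the line through $P$ perpendicular to $A_j$. Since $A_j$ is a diameter, any chord perpendicular to it is bisected by $A_j$, so its two endpoints are mirror images of each other in $A_j$; hence $Im_j(P)$ is exactly the reflection of $P$ in $A_j$. Next, from the numbering scheme the half-axis $H_m$ sits at angle $\frac{\pi m}{\alpha}$ measured anticlockwise from the positive $x$-axis, so the axis $A_j$ lies at angle $\theta_j=\frac{\pi j}{\alpha}$ and the sector $S_k$ occupies the angular interval from $\frac{\pi k}{\alpha}$ to $\frac{\pi(k+1)}{\alpha}$.

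I would then use that reflection in a line at angle $\theta_j$ sends a point at angle $\phi$ to the point at angle $2\theta_j-\phi$, and apply this to the two boundary rays of $S_k$. The ray at angle $\frac{\pi k}{\alpha}$ goes to angle $\frac{\pi(2j-k)}{\alpha}$ and the ray at angle $\frac{\pi(k+1)}{\alpha}$ goes to angle $\frac{\pi(2j-k-1)}{\alpha}$. Thus $Im_j(S_k)$ is the sector whose boundary rays lie at $\frac{\pi(2j-k-1)}{\alpha}$ and $\frac{\pi(2j-k)}{\alpha}$, which is precisely $S_h$ with $h\equiv 2j-k-1 \pmod{2\alpha}$, as claimed.

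The only real care needed is the modular bookkeeping: reflection reverses orientation, so the lower boundary of $S_k$ maps to the upper boundary of the image sector and vice versa, and one must reduce the resulting index modulo $2\alpha$ to land in the range $0\le h<2\alpha$. An alternative, essentially index-free, route is to invoke the preceding lemma together with the note that the $t$th sector clockwise from $H_j$ is $S_{(j-t)\bmod 2\alpha}$: writing $S_k$ as the $t$th clockwise sector forces $t\equiv j-k \pmod{2\alpha}$, whereupon that lemma identifies its image as the $t$th anticlockwise sector $S_{(j+t-1)\bmod 2\alpha}=S_{(2j-k-1)\bmod 2\alpha}$, giving the same $h$. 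I expect the orientation reversal in the index to be the one spot where a sign error is easy to make, so I would double-check it on the small case $k=j$, where $S_j$ lies immediately anticlockwise of $A_j$ and its reflection should be $S_{j-1}$ lying immediately clockwise of $A_j$, in agreement with $h=(2j-j-1)\bmod 2\alpha = (j-1)\bmod 2\alpha$.
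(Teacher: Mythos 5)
Your proof is correct, but your primary argument takes a different route from the paper's. The paper proves the index formula combinatorially: it invokes the preceding (reader-verified) lemma that the $t$-th sector clockwise from $H_j$ maps under $Im_j$ to the $t$-th sector anticlockwise from $H_j$, and then splits into the cases $k<j$ (where $S_k$ is the $(j-k)$-th clockwise sector) and $k\geq j$ (where $S_k$ is the $(k-j+1)$-th anticlockwise sector), reading off $S_{(2j-k-1)\bmod 2\alpha}$ in each case. You instead identify $Im_j$ as the reflection in the diameter $A_j$, place $H_m$ at angle $\frac{\pi m}{\alpha}$ and $S_k$ on the angular interval $[\frac{\pi k}{\alpha},\frac{\pi(k+1)}{\alpha}]$, and apply the reflection formula $\phi\mapsto 2\theta_j-\phi$ to the two boundary rays; this yields the bounding half-axes $H_{(2j-k-1)\bmod 2\alpha}$ and $H_{(2j-k)\bmod 2\alpha}$ and hence $h=(2j-k-1)\bmod 2\alpha$ in one computation, with no case split. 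Your route buys a uniform, coordinate-based derivation that also makes the orientation reversal explicit (and your check at $k=j$ confirms the sign); the paper's route buys independence from angle arithmetic by leaning on the already-stated sector-counting lemma, at the cost of a case analysis. Your closing remark correctly identifies the paper's argument as the alternative index-free path, so the two derivations agree.
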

\begin{proof}
If $k<j$, $S_k$ is the $(j-k)$th sector from $H_j$ in clockwise direction and therefore $Im_j(S_k)$ will be the $(j-k)$th sector from $H_j$ in the anticlockwise direction namely $S_{2j-k-1}$. If $k \geq j$, $S_k$ is the $(k-j+1)$th sector from $H_j$ in anticlockwise direction. So $Im_j(S_k)$= $S_{(2j-k-1) \ mod \ 2 \alpha}$ as required.
\end{proof}
\begin{lemma}\label{Image2}
For any reference axis $A_j$, for $0 \leq j < \alpha$, $Im_j(l(\textbf{u}))=r(Im_j(\textbf{u}))$ and
$Im_j(r(\textbf{u}))=l(Im_j(\textbf{u}))$.
\end{lemma}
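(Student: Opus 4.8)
The plan is to recognise $Im_j$ as the reflection of the circle across the reference axis $A_j$ and then to exploit the single fact that such a reflection reverses the clockwise orientation of the circle. Since $A_j$ is a line through the origin (the centre of the unit circle), reflecting every point of the plane across $A_j$ is an isometry that carries the circle onto itself; and by definition the chord $P\,Im_j(P)$ is perpendicular to $A_j$, which is exactly the condition that $Im_j(P)$ be the mirror image of $P$ across $A_j$. Consequently $Im_j$ maps each arc $\textbf{u}$ to an arc $Im_j(\textbf{u})$ of the same length, sending interior points to interior points and the two endpoints of $\textbf{u}$ to the two endpoints of $Im_j(\textbf{u})$.

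The key observation I would use is that reflection across a line is orientation reversing: a clockwise traversal of the circle is sent to an anticlockwise traversal, and vice versa. Recall that $l(\textbf{u})$ is the first endpoint met when moving anticlockwise from an interior point of $\textbf{u}$ and $r(\textbf{u})$ is the first endpoint met moving clockwise; equivalently, starting at $l(\textbf{u})$ and moving clockwise we sweep out $\textbf{u}$ and arrive at $r(\textbf{u})$. Applying $Im_j$ and using that it reverses orientation, the arc $Im_j(\textbf{u})$ is swept out when we move \emph{anticlockwise} from $Im_j(l(\textbf{u}))$ to $Im_j(r(\textbf{u}))$; equivalently a clockwise sweep of $Im_j(\textbf{u})$ runs from $Im_j(r(\textbf{u}))$ to $Im_j(l(\textbf{u}))$. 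Reading off the definitions of left and right endpoint for the arc $Im_j(\textbf{u})$ then yields $l(Im_j(\textbf{u}))=Im_j(r(\textbf{u}))$ and $r(Im_j(\textbf{u}))=Im_j(l(\textbf{u}))$, which are precisely the two claimed identities.

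I expect the only genuinely delicate point to be making the orientation-reversal step rigorous, i.e. justifying that the first endpoint reached by an anticlockwise traversal of $\textbf{u}$ is carried by $Im_j$ to the first endpoint reached by a clockwise traversal of $Im_j(\textbf{u})$. I would argue this cleanly by writing $Im_j$ on the circle as the map sending the point at polar angle $\theta$ to the point at polar angle $2\theta_j-\theta$, where $\theta_j$ is the angle of $A_j$, so that an increase in $\theta$ (anticlockwise motion) corresponds to a decrease of the image angle (clockwise motion); a short check then confirms that endpoints, interiors, and the left/right labelling transform exactly as stated. As an alternative purely combinatorial cross-check, the sector version already established in Lemma \ref{Image} can be invoked to locate the extreme sectors of $Im_j(\textbf{u})$ and thereby confirm the swap of the two endpoints.
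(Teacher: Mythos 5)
Your proof is correct: identifying $Im_j$ as the reflection of the circle across $A_j$ (an orientation-reversing isometry) and concluding that the anticlockwise-extreme endpoint $l(\textbf{u})$ is carried to the clockwise-extreme endpoint $r(Im_j(\textbf{u}))$, with the polar-angle formula $\theta\mapsto 2\theta_j-\theta$ making the orientation reversal rigorous, is exactly the intended argument. The paper itself leaves this lemma to the reader, so your write-up simply supplies the standard proof it had in mind; no discrepancy to report.
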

\begin{proof}
Left to the reader.
\end{proof}
\begin{defi}
With respect to a reference axis $A_j$
we define the $\textbf{projection}$ of a point $P$ on the circle as the point $P'$ on $A_j$ such that
the line defined by $PP'$ is perpendicular to $A_j$. We denote $P'$
as $Proj_j(P)$. When $P$ is on $A_j$ we take $Proj_j(P)=P$. If X
is a set of points on the circle, we will use $Proj_j(X)$ to denote
the set $\{ Proj_j(P) : P \in X \}$.
\end{defi}
Note that the reference axis $A_j$ can be considered to be obtained by rotating the x-axis (i.e. the real line) about the origin by an angle of $\frac{\pi j}{\alpha}$ radian in the anticlockwise direction. Then clearly there is a natural bijection between the points of $A_j$ and the set of real numbers namely the one which maps the positive x-axis to the positive half axis $H_j$ of $A_j$. For X $\subset A_j$ let $X' \subset \mathbb{R}$ be the set of real numbers that corresponds to X with respect to this bijection. Then we define $\inf{X}$ (respectively $\sup{X}$) to be the point on $A_j$ that corresponds to $\inf{X'}$ (respectively $\sup{X'}$).
\begin{fact}\label{projpoint}
If P and $P'$ are points on the circle such that $Proj_j(P)=Proj_j(P')$ for some j, where $0 \leq j < \alpha$. Then either $Im_j(P)=P'$ or $P=P'$.
\end{fact}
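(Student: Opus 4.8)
The plan is to interpret both operations geometrically with respect to the diameter $A_j$ and to reduce the claim to the elementary fact that a line meets a circle in at most two points. First I would fix a point $Q$ on $A_j$ and describe the fibre of $Proj_j$ over $Q$: by the definition of projection, a point $R$ in the plane satisfies $Proj_j(R)=Q$ precisely when the segment $RQ$ is perpendicular to $A_j$, i.e. when $R$ lies on the line $\ell_Q$ through $Q$ perpendicular to $A_j$. Thus the set of points of the circle that project to $Q$ is exactly the intersection of $\ell_Q$ with the circle.

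Next I would observe that $A_j$ passes through the origin, which is the centre, so $A_j$ is a diameter and reflection across the line $A_j$ is a symmetry of the circle. By the definition of the image, $Im_j$ is precisely this reflection restricted to the circle: for a point $P$ on the circle the chord joining $P$ and $Im_j(P)$ is perpendicular to $A_j$ and $Im_j(P)$ again lies on the circle, so $Im_j(P)$ is the mirror image of $P$ in $A_j$ (with the stated convention $Im_j(P)=P$ when $P$ already lies on $A_j$).

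Now I would combine these observations. Put $Q=Proj_j(P)=Proj_j(P')$. Both $P$ and $P'$ lie on $\ell_Q$ and on the circle, and since a line meets a circle in at most two points this common intersection has at most two elements. As $\ell_Q$ is perpendicular to the diameter $A_j$, these (at most two) elements are symmetric about $A_j$, hence they are exactly $P$ and $Im_j(P)$. Therefore $P'$, lying in this intersection, must equal $P$ or $Im_j(P)$, which is the assertion.

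The only delicate point is the degenerate case where $P$ lies on $A_j$: then $Q$ is an endpoint of the diameter, $\ell_Q$ is tangent to the circle, and the intersection of $\ell_Q$ with the circle is the single point $P$. The conclusion still holds because the paper's convention gives $Im_j(P)=P$, so the two-element set collapses to $\{P\}$ and necessarily $P'=P$. I expect this boundary case, together with the verification that $Im_j$ coincides with reflection across the diameter $A_j$, to be the only steps needing care; everything else is immediate.
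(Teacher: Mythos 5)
Your proof is correct; the paper itself leaves this fact to the reader, and your argument --- identifying the fibre of $Proj_j$ over a point $Q$ of $A_j$ with the intersection of the circle and the line through $Q$ perpendicular to the diameter $A_j$, which consists of at most two points that are mirror images of each other across $A_j$ --- is exactly the intended elementary justification. You also correctly dispose of the degenerate tangent case via the paper's convention that $Im_j(P)=P$ when $P$ lies on $A_j$.
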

\begin{proof}
Left to the reader.
\end{proof}
\begin{lemma}\label{proj}
 Let P, $P'$ be points on the circle such that $P \neq P'$, $P \in S_k$ $($where $0 \leq k <2 \alpha)$ and $Proj_j(P)=Proj_j(P')$ $($where $0 \leq j < \alpha)$. Then $P' \in S_h$ where $h=(2j-k-1) \ mod \ 2 \alpha$.
\end{lemma}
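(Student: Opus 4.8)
The plan is to reduce the statement to the two facts already established: Fact~\ref{projpoint}, which characterizes when two distinct points of the circle share a projection onto $A_j$, and Lemma~\ref{Image}, which computes the image of a sector under reflection in $A_j$. Once these are in place the whole argument is a two-step deduction, so I would not develop any new geometry.

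First I would use the hypothesis to identify $P'$ with the image of $P$. By Fact~\ref{projpoint}, the equality $Proj_j(P)=Proj_j(P')$ forces either $Im_j(P)=P'$ or $P=P'$. Since we are given $P\neq P'$, the second alternative is excluded, and hence $P'=Im_j(P)$. Geometrically this is just the observation that two distinct points of the circle having a common foot of perpendicular on $A_j$ must be mirror images across $A_j$, which is exactly the content of the image map.

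Next I would locate this image sector. As $P\in S_k$, applying $Im_j$ and invoking Lemma~\ref{Image}, which gives $Im_j(S_k)=S_h$ with $h=(2j-k-1)\bmod 2\alpha$, we obtain $P'=Im_j(P)\in Im_j(S_k)=S_h$. This is precisely the required conclusion, so nothing further is needed.

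The only point requiring a moment of care --- and the nearest thing to an obstacle --- is resolving the two alternatives of Fact~\ref{projpoint} correctly: one must explicitly use $P\neq P'$ to discard the case $P=P'$ and retain $P'=Im_j(P)$. It is also worth noting that $P\in S_k$ means $P$ lies strictly inside a sector and therefore off every reference axis, so that $Im_j(P)\neq P$, consistent with $P'\neq P$; this keeps the application of Lemma~\ref{Image} unambiguous. Everything else follows immediately from the previously proved results.
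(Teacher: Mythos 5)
Your proposal is correct and follows exactly the paper's own two-step argument: Fact~\ref{projpoint} together with $P\neq P'$ gives $P'=Im_j(P)$, and Lemma~\ref{Image} identifies $Im_j(S_k)=S_h$ with $h=(2j-k-1)\bmod 2\alpha$. The extra remark that $P$ lies strictly inside a sector (so $Im_j(P)\neq P$) is a harmless clarification; otherwise the reasoning matches the paper's proof.
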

\begin{proof}
By fact $\ref{projpoint}$, $S_h=Im_j(S_k)$. Then $h=(2j-k-1) \ mod \ 2 \alpha$ by Lemma $\ref{Image}$.
\end{proof}
\begin{defi}
If $Int(\textbf{u}) \neq \emptyset$ then let $H_{j_1}, H_{j_2},\ldots,H_{j_t}$ be the set of half axes in
$Int(\textbf{u})$ in the order in which they appear, as one traverses the arc in the anticlockwise direction. $($It may be noted that if the arc is properly contained in one sector then $Int(\textbf{u})$=$\emptyset)$. Now $\textbf{median half axis}$ $($respectively \textbf{median reference axis}$)$ with respect to arc \textbf{u} is defined to be $H_{j_{\left \lceil \frac{|Int(\textbf{u})|}{2} \right \rceil}}$ $($respectively $A_{j_{\left \lceil \frac{|Int(\textbf{u})|}{2} \right \rceil} \ \mod \ \alpha})$. We will use the notation $H_{m(\textbf{u})}$ and $A_{m(\textbf{u}) \ mod \ \alpha}$ respectively to denote the median half axis and median reference axis with respect to arc \textbf{u}.  
\end{defi}
The proof of our main theorem (Theorem $\ref{box_deg_less_n/2}$) involves understanding the image of an arc \textbf{u} with respect to $A_0$ and $A_1$. It also turns out that we have to consider various cases based on $|Int(\textbf{u})|$. For a rigorous presentation of the proof we need to calculate the sector numbers of $head(\textbf{u})$, $tail(\textbf{u})$ and their images with respect to $A_{0}$ and $A_{1}$. To avoid clumsiness within the proof, we have collected the required information in the Table 1.\\\\
Let \textbf{u} be an arc such that $|Int(\textbf{u})| \geq 1$. We assume that $m(\textbf{u})=0$. Let p= $ \left \lceil \frac{|Int(\textbf{u})|}{2} \right \rceil$. The first column specifies the condition on $|Int(\textbf{u})|$ and the axis about which the image is taken. The next four columns specify the sector numbers corresponding to head(\textbf{u}), $Im_j(head(\textbf{u}))$, tail(\textbf{u}) and $Im_j(tail(\textbf{u}))$ respectively.\\\\
For the benefit of the reader we explain here how the entries in the first row of the Table are obtained:\\\\
When $|Int(\textbf{u})|$ is even clearly head(\textbf{u}) is the $(p+1)$th sector from
$H_{m(\textbf{u})}=H_0$ in anticlockwise direction, i.e. $S_{p}$. According to Lemma $\ref{Image}$, $Im_{0}(head(\textbf{u}))=S_{-(p+1)}$. Similarly
tail(\textbf{u}) is the $p$th sector from $H_{0}$
in the clockwise direction that is
$tail(\textbf{u})$=$S_{-p}$. According to Lemma $\ref{Image}$ $Im_{0}(tail(\textbf{u}))=S_{p-1}$. The entries in the remaining rows are obtained in a similar way. (We use the notation $len(\textbf{u})$, $len_h(\textbf{u})$ and $len_t(\textbf{u})$ to denote $length(\textbf{u})$, $length(\textbf{u} \cap head(\textbf{u}))$ and $length(\textbf{u} \cap tail(\textbf{u}))$ respectively.)
\begin{table}[!h]\label{sector number}
\caption{Sector numbers corresponding to head(\textbf{u}), tail(\textbf{u}) and their images with respect to median axes for an arc \textbf{u} with m(\textbf{u})=0}
\centering
\begin{tabular}{|p{3cm}|c|c|c|c|}
 \hline
 Condition & \multicolumn{4}{c|}{Sector Number (mod $2\alpha$)} \\
\cline{2-5}
 &head(\textbf{u}) & $Im_j(head(\textbf{u}))$ & tail(\textbf{u}) & $Im_j(tail(\textbf{u}))$\tabularnewline[3pt]
 \hline\hline
 $|Int(\textbf{u})|$ is even and j=0& $S_{p}$ & $S_{-(p+1)}$ & $S_{-p}$ & $S_{p-1}$\tabularnewline[2pt] 
 \hline

 $|Int(\textbf{u})|$ is even and j=1& $S_{p}$ & $S_{-p+1}$ & $S_{-p}$ & $S_{p+1}$\\[2pt]
 \hline\hline

 $|Int(\textbf{u})|$ is odd and j=0 & $S_{p-1}$ & $S_{-p}$ & $S_{-p}$ & $S_{p-1}$\tabularnewline[2pt]
 \hline

 $|Int(\textbf{u})|$ is odd and j=1& $S_{p-1}$ & $S_{-p+2}$ & $S_{-p}$ & $S_{p+1}$\tabularnewline[2pt] 

 \hline
\end{tabular}
\end{table}
\begin{theo}\label{Image_Arc}
Let \textbf{u} and $\textbf{v}$ be arcs such that $\textbf{v} \cap
\textbf{u} = \emptyset$, $len(\textbf{u}) \leq len(\textbf{v})$. Let $|Int(\textbf{u})| \geq 2$ and therefore $p \geq 1$. Let $m(\textbf{u})=0$.
\begin{enumerate}
	\item If $|Int(\textbf{u})|$ is even
		\begin{enumerate}
		\item If $\textbf{v} \cap Im_{0}(\textbf{u}) \neq
		\emptyset$ then $\textbf{v}$ must be anticlockwise adjacent to
		$Im_{0}(\textbf{u})$.
		\item If $\textbf{v} \cap Im_{1}(\textbf{u}) \neq \emptyset$
		then $\textbf{v}$ must be clockwise adjacent to $Im_{1}(\textbf{u})$.
		\end{enumerate}

	\item If $|Int(\textbf{u})|$ is odd and $len_h(\textbf{u}) > len_t(\textbf{u})$
		\begin{enumerate}
		\item If $\textbf{v} \cap Im_{0}(\textbf{u}) \neq
		\emptyset$ then $\textbf{v}$ must be anticlockwise adjacent to
		$Im_{0}(\textbf{u})$.
		\item If $\textbf{v} \cap Im_{1}(\textbf{u}) \neq \emptyset$
		then $\textbf{v}$ must be clockwise adjacent to $Im_{1}(\textbf{u})$.
		\end{enumerate}
	
	\item  If $|Int(\textbf{u})|$ is odd and $len_h(\textbf{u}) = len_t(\textbf{u})$ then $\textbf{v} \cap Im_{0}(\textbf{u}) =
		\emptyset$.
\end{enumerate}
\end{theo}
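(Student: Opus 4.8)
The plan is to reduce the whole statement to one–dimensional interval reasoning by working in angular coordinates. I would place the median half axis $H_{m(\mathbf u)}=H_0$ at angle $0$ and write $\mathbf u=[\theta_r,\theta_l]$ (anticlockwise), where $\theta_r$ is the angle of $r(\mathbf u)$ and $\theta_l$ that of $l(\mathbf u)$; since $H_0$ is a crossed (median) half axis, $\theta_r<0<\theta_l$. Reflection across $A_0$ sends $\theta\mapsto-\theta$ and across $A_1$ sends $\theta\mapsto\tfrac{2\pi}{\alpha}-\theta$, so $Im_0(\mathbf u)=[-\theta_l,-\theta_r]$ and $Im_1(\mathbf u)=[\tfrac{2\pi}{\alpha}-\theta_l,\tfrac{2\pi}{\alpha}-\theta_r]$, and by Lemma \ref{Image2} their relevant endpoints are $r(Im_0(\mathbf u))=Im_0(l(\mathbf u))$ at angle $-\theta_l$ and $l(Im_1(\mathbf u))=Im_1(r(\mathbf u))$ at angle $\tfrac{2\pi}{\alpha}-\theta_r$. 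Reading the sector numbers of $head(\mathbf u)$ and $tail(\mathbf u)$ off Table 1 pins the ranges of $\theta_l,\theta_r$ and yields the three sign facts I will use: $\theta_r<0$, $\theta_l>\tfrac{\pi}{\alpha}$ (because $p\ge 1$ in the even case and $p\ge 2$ in the odd case), and $\theta_l+\theta_r>0$ automatically in the even case and, in the odd case, exactly when $len_h(\mathbf u)>len_t(\mathbf u)$ (whereas $len_h(\mathbf u)=len_t(\mathbf u)$ forces $\theta_l+\theta_r=0$).

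Part 3 is then immediate: $len_h(\mathbf u)=len_t(\mathbf u)$ gives $\theta_l=-\theta_r$, hence $Im_0(\mathbf u)=[-\theta_l,-\theta_r]=[\theta_r,\theta_l]=\mathbf u$, so $\mathbf v\cap Im_0(\mathbf u)=\mathbf v\cap\mathbf u=\emptyset$ by hypothesis.

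For the adjacency conclusions (parts 1 and 2, which share an identical argument) I would first isolate the ``free part'' of the image arc, namely the portion lying in the complement arc $W=(\theta_l,\theta_r+2\pi)$ of $\mathbf u$; since $\mathbf v\cap\mathbf u=\emptyset$, the arc $\mathbf v$ is contained in $W$. A short interval intersection gives $Im_0(\mathbf u)\cap\mathbf u=[\theta_r,-\theta_r]$ (using $\theta_l+\theta_r>0$), so the free part of $Im_0(\mathbf u)$ is the sub-arc from $r(Im_0(\mathbf u))$ at $-\theta_l$ up to $r(\mathbf u)$, of angular length exactly $\theta_l+\theta_r$. Writing $\mathbf v=[\phi_r,\phi_l]\subseteq W$: if $\mathbf v$ meets $Im_0(\mathbf u)$ it must meet this free part, and if it failed to contain the endpoint $r(Im_0(\mathbf u))$ then $\mathbf v$ would sit strictly inside the free part, forcing $len(\mathbf v)<\theta_l+\theta_r<\theta_l-\theta_r=len(\mathbf u)$, where the middle inequality is just $\theta_r<0$; this contradicts $len(\mathbf u)\le len(\mathbf v)$. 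Hence $\mathbf v$ contains $r(Im_0(\mathbf u))$, i.e.\ $\mathbf v$ is anticlockwise adjacent to $Im_0(\mathbf u)$, which is precisely parts 1(a) and 2(a). The mirror statements 1(b) and 2(b) run identically for $A_1$: here $Im_1(\mathbf u)\cap\mathbf u=[\tfrac{2\pi}{\alpha}-\theta_l,\theta_l]$, so the free part runs from $l(\mathbf u)$ to $l(Im_1(\mathbf u))$ at $\tfrac{2\pi}{\alpha}-\theta_r$ and has length $\tfrac{2\pi}{\alpha}-\theta_l-\theta_r$; if $\mathbf v$ met $Im_1(\mathbf u)$ without containing $l(Im_1(\mathbf u))$ it would satisfy $len(\mathbf v)<\tfrac{2\pi}{\alpha}-\theta_l-\theta_r<\theta_l-\theta_r=len(\mathbf u)$, the middle inequality now being $\theta_l>\tfrac{\pi}{\alpha}$, again contradicting $len(\mathbf u)\le len(\mathbf v)$. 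Thus $\mathbf v$ contains $l(Im_1(\mathbf u))$ and is clockwise adjacent to $Im_1(\mathbf u)$.

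The main obstacle is bookkeeping rather than a single hard idea: one must correctly place $head(\mathbf u)$, $tail(\mathbf u)$ and the two image endpoints in their sectors and determine which sub-arc of $Im_j(\mathbf u)$ overlaps $\mathbf u$ and which is free. This is exactly what Table 1 together with the sign relations $\theta_r<0$, $\theta_l>\tfrac{\pi}{\alpha}$, $\theta_l+\theta_r>0$ supplies, and the even/odd split plus the $len_h$ versus $len_t$ comparison is what decides whether $r(Im_0(\mathbf u))$ falls in $W$ (parts 1 and 2) or coincides with $\mathbf u$ (part 3); the bound $|Int(\mathbf u)|\le\alpha-1$ from Observation 1 is what keeps $p$ small enough that $l(Im_1(\mathbf u))$ still lands inside $W$. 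Once the geometry is fixed, every case collapses to the single inequality $len(\mathbf u)\le len(\mathbf v)$, so the only real care needed is that each comparison is strict in the right place, which is guaranteed because endpoints avoid the reference axes and hence none of $\theta_r$, $\theta_l$, $\theta_l\pm\tfrac{\pi}{\alpha}$ vanishes.
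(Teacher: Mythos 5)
Your proposal is correct and follows essentially the same route as the paper: the angular coordinates $\theta_r<0<\theta_l$ with the sign facts $\theta_l+\theta_r>0$ and $\theta_l>\pi/\alpha$ are exactly the content of Table 1 and of the paper's observations that $l(Im_0(\mathbf{u}))\in\mathbf{u}$ and $r(Im_1(\mathbf{u}))\in\mathbf{u}$, and part 3 is handled identically via $Im_0(\mathbf{u})=\mathbf{u}$. The only (harmless) difference is that where the paper argues ``$\mathbf{v}\not\subset Im_j(\mathbf{u})$, one endpoint of $Im_j(\mathbf{u})$ lies in $\mathbf{u}$, hence $\mathbf{v}$ contains the other,'' you bound the length of the portion of $Im_j(\mathbf{u})$ outside $\mathbf{u}$ by $len(\mathbf{u})$ and invoke $len(\mathbf{u})\le len(\mathbf{v})$ directly, which is a slightly more quantitative finish to the same argument.
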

\noindent{}\textbf{Proof of 1(a):}  Assume that
$|Int(\textbf{u})|=2p$. Since $len(\textbf{v})
\geq len(\textbf{u})$ $=len(Im_{0}(\textbf{u}))$, $\textbf{v} \not \subset
Im_{0}(\textbf{u})$. Now since $\textbf{v} \cap
Im_{0}(\textbf{u}) \neq \emptyset$ it follows that $\textbf{v}$  contains either $l(Im_{0}(\textbf{u}))$ or $r(Im_{0}(\textbf{u}))$ i.e. \textbf{v} is either clockwise or anticlockwise adjacent to $Im_{0}(\textbf{u})$. According to Table 1, $Im_{0}$
$(tail(\textbf{u}))=S_{p-1}$. Note from Table 1 that $tail(\textbf{u})=S_{-p}$ and $head(\textbf{u})=S_{p}$. Therefore $S_{p-1} \subset \textbf{u}$ since $S_{p-1}$ is a sector strictly in-between $S_{p}$ and $S_{-p}$ in the clockwise direction, since $p \geq 1$. So recalling Lemma $\ref{Image2}$ we have $l(Im_{0}(\textbf{u}))) = Im_{0}(r(\textbf{u}))\in Im_{0}(tail(\textbf{u}))=S_{p-1} \subset \textbf{u}$. Therefore $l(Im_{0}(\textbf{u})) \not \in \textbf{v}$ since $\textbf{u} \cap \textbf{v} =\emptyset$. It follows that $r(Im_{0}(\textbf{u})) \in \textbf{v}$ i.e. \textbf{v} is anticlockwise adjacent to $Im_{0}(\textbf{u})$.\\\\
\noindent{}\textbf{Proof of 1(b):} The proof is similar to that of 1(a). According to Table 1,
$Im_{1}(head(\textbf{u}))=S_{h'}$ where
$h'=(-p+1)$. Note that the sector $S_{h'}$ is strictly in-between $head(\textbf{u})=S_{p}$ and $tail(\textbf{u})=S_{-p}$ in the clockwise direction, since $p \geq 1$. So recalling Lemma $\ref{Image2}$ and using Table 1, we have $r(Im_{1}(\textbf{u})))= Im_{1}(l(\textbf{u})) \in Im_{1}(head(\textbf{u}))=S_{h'} \subset \textbf{u}$. Therefore $r(Im_{1}(\textbf{u})) \not \in \textbf{v}$ since $\textbf{u} \cap \textbf{v} =\emptyset$. It follows that $l(Im_{1}(\textbf{u})) \in \textbf{v}$ i.e. \textbf{v} is clockwise adjacent to $Im_{1}(\textbf{u})$.\\\\
\noindent{}\textbf{Proof of 2(a):} Let $|Int(\textbf{u})|=2p+1$ and $len_h(\textbf{u}) > len_t(\textbf{u})$. Then $l(Im_{0}(\textbf{u}))=$ $Im_{0}(r(\textbf{u}))$ $\in$ $Im_0(tail(\textbf{u}))$ $=S_{p-1}$ $=head(\textbf{u})$. Since $len_h(\textbf{u}) > len_t(\textbf{u})$ $=len(Im_0(\textbf{u} \cap tail(\textbf{u})))$. We infer that $l(Im_{0}(\textbf{u})) \in \textbf{u}$ and therefore $l(Im_{0}(\textbf{u})) \not \in \textbf{v}$. Therefore $r(Im_{0}(\textbf{u})) \in \textbf{v}$ i.e. $\textbf{v}$ is anticlockwise adjacent to $Im_{0}(\textbf{u})$.\\\\
\noindent{}\textbf{Proof of 2(b):} The proof is similar to that of 1(a). According to Table 1,
$Im_{1}(head(\textbf{u}))$ $=S_{h'}$ where
$h'=(-p+2)$. Since $head(\textbf{u})=S_{p-1}$ and $tail(\textbf{u})=S_{-p}$, $S_{h'}$ is strictly in-between $head(\textbf{u})$ and $tail(\textbf{u})$ in the clockwise direction since $p \geq 2$. So recalling Lemma $\ref{Image2}$ we have $r(Im_{1}(\textbf{u}))= Im_{1}(l(\textbf{u})) \in Im_{1}(head(\textbf{u}))=S_{h'} \subset \textbf{u}$. Therefore $r(Im_{1}(\textbf{u})) \not \in \textbf{v}$ since $\textbf{u} \cap \textbf{v} =\emptyset$. It follows that $l(Im_{1}(\textbf{u})) \in \textbf{v}$ i.e. \textbf{v} is clockwise adjacent to $Im_{1}(\textbf{u})$.\\\\
\noindent{}\textbf{Proof of 3:} When $|Int(\textbf{u})|$ is odd then $Im_0(head(\textbf{u}))$ $=S_{-p}$ $=tail(\textbf{u})$ and $Im_0(tail(\textbf{u}))$ $=S_{p-1}$ $=head(\textbf{u})$. Now since $len_h(\textbf{u}) = len_t(\textbf{u})$, we have $r(\textbf{u})= Im_{0}(l(\textbf{u}))$ and $l(\textbf{u})= Im_{0}(r(\textbf{u}))$. Hence $Im_{0}(\textbf{u})=\textbf{u}$ and therefore $\textbf{v} \cap Im_{0}(\textbf{u}) = \emptyset$ since $\textbf{u} \cap \textbf{v} = \emptyset$.\\
\begin{theo}\label{Image_Arc2}
Let \textbf{u} and $\textbf{v}$ be arcs such that $\textbf{v} \cap
\textbf{u} = \emptyset$, $len(\textbf{u}) \leq len(\textbf{v})$. Let $|Int(\textbf{u})| = 1$ and therefore $p=1$. Let $m(\textbf{u})=0$.
\begin{enumerate}
 \item If $len_h(\textbf{u}) > len_t(\textbf{u})$
 	\begin{enumerate}
 	\item If $\textbf{v} \cap Im_{0}(\textbf{u}) \neq \emptyset$ then $\textbf{v}$ must be anticlockwise adjacent to $Im_{0}(\textbf{u})$.
 	\item If $\textbf{v} \cap Im_{0}(\textbf{u}) \neq \emptyset$ and $\textbf{v} \cap Im_{1}(\textbf{u}) \neq \emptyset$ then $\textbf{v}$ must be clockwise adjacent to $Im_{1}(\textbf{u})$.
 	\end{enumerate}
\item  If $len_h(\textbf{u}) = len_t(\textbf{u})$ then $\textbf{v} \cap Im_{0}(\textbf{u}) =
		\emptyset$.
\end{enumerate}
\end{theo}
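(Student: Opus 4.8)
The plan is to set up an angular coordinate and then reuse, case by case, the machinery of Theorem~\ref{Image_Arc}, paying special attention to the one place where the degenerate value $p=1$ genuinely differs. Since $|Int(\textbf{u})|=1$ and $m(\textbf{u})=0$, the only half axis met by $\textbf{u}$ is $H_0$, so $\textbf{u}$ crosses $A_0$ and, reading the odd rows of Table~1 with $p=1$, we have $head(\textbf{u})=S_0$ and $tail(\textbf{u})=S_{-1}$. I would measure angles anticlockwise with $H_0$ at angle $0$, so that $l(\textbf{u})$ sits at angle $len_h(\textbf{u})\in(0,\pi/\alpha)$, $r(\textbf{u})$ at angle $-len_t(\textbf{u})$, and $\textbf{u}$ occupies the closed angular interval $[-len_t(\textbf{u}),len_h(\textbf{u})]$; its complement is the open arc swept anticlockwise from $l(\textbf{u})$ to $r(\textbf{u})$. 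Throughout, the ``not contained'' trick of Theorem~\ref{Image_Arc} applies: because $len(\textbf{v})\ge len(\textbf{u})=len(Im_j(\textbf{u}))$, $\textbf{v}$ cannot sit inside $Im_j(\textbf{u})$, so whenever $\textbf{v}\cap Im_j(\textbf{u})\neq\emptyset$ it contains $l(Im_j(\textbf{u}))$ or $r(Im_j(\textbf{u}))$, i.e. it is clockwise or anticlockwise adjacent to $Im_j(\textbf{u})$.

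Part~1(a) I would prove exactly as in part~2(a) of Theorem~\ref{Image_Arc}. By Lemma~\ref{Image2} and Table~1, $l(Im_0(\textbf{u}))=Im_0(r(\textbf{u}))\in Im_0(tail(\textbf{u}))=S_{p-1}=head(\textbf{u})$, and it sits at angle $len_t(\textbf{u})$; since $len_h(\textbf{u})>len_t(\textbf{u})$ this point falls strictly inside $\textbf{u}\cap head(\textbf{u})$, hence inside $\textbf{u}$. As $\textbf{u}\cap\textbf{v}=\emptyset$, we get $l(Im_0(\textbf{u}))\notin\textbf{v}$, so the remaining endpoint $r(Im_0(\textbf{u}))$ lies in $\textbf{v}$ and $\textbf{v}$ is anticlockwise adjacent to $Im_0(\textbf{u})$.

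Part~1(b) is the real obstacle, and it is exactly where the extra hypothesis $\textbf{v}\cap Im_0(\textbf{u})\neq\emptyset$ is forced on us. The argument of part~2(b) of Theorem~\ref{Image_Arc} relied on $Im_1(head(\textbf{u}))=S_{-p+2}\subset\textbf{u}$, which required $p\ge 2$; with $p=1$ this sector is $S_1$, which is \emph{not} contained in $\textbf{u}$, so one can no longer force $r(Im_1(\textbf{u}))$ into $\textbf{u}$. Instead I would argue by connectivity. From Part~1(a) we already know $r(Im_0(\textbf{u}))=Im_0(l(\textbf{u}))\in\textbf{v}$, a point sitting at angle $2\pi-len_h(\textbf{u})$ in the complement of $\textbf{u}$. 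On the other hand $Im_1(\textbf{u})$ occupies $[\,2\pi/\alpha-len_h(\textbf{u}),\,2\pi/\alpha+len_t(\textbf{u})\,]$, with $l(Im_1(\textbf{u}))=Im_1(r(\textbf{u}))$ at its upper end $2\pi/\alpha+len_t(\textbf{u})$, and by hypothesis $\textbf{v}$ meets this interval at some point $q\le 2\pi/\alpha+len_t(\textbf{u})$. The key ordering inequality is $2\pi/\alpha+len_t(\textbf{u})<2\pi-len_h(\textbf{u})$, equivalently $len(\textbf{u})<2\pi(1-1/\alpha)$, which holds by Observation~1; one also checks that all these points lie in the complement of $\textbf{u}$ using $len_h(\textbf{u}),len_t(\textbf{u})<\pi/\alpha$ and $\alpha\ge 2$. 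Hence, along the complementary arc, $l(Im_1(\textbf{u}))$ lies between the point $q\in\textbf{v}$ and the point $r(Im_0(\textbf{u}))\in\textbf{v}$; since $\textbf{v}$ is a connected arc contained in that complement, it must contain the entire sub-arc joining $q$ to $r(Im_0(\textbf{u}))$, and in particular $l(Im_1(\textbf{u}))\in\textbf{v}$. Thus $\textbf{v}$ is clockwise adjacent to $Im_1(\textbf{u})$.

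Part~2 is immediate and mirrors part~3 of Theorem~\ref{Image_Arc}: when $len_h(\textbf{u})=len_t(\textbf{u})$ the arc is symmetric about $A_0$, so by Lemma~\ref{Image2} $Im_0(l(\textbf{u}))=r(\textbf{u})$ and $Im_0(r(\textbf{u}))=l(\textbf{u})$, giving $Im_0(\textbf{u})=\textbf{u}$; since $\textbf{u}\cap\textbf{v}=\emptyset$ we conclude $\textbf{v}\cap Im_0(\textbf{u})=\emptyset$. The only delicate point in the whole proof is the cyclic-ordering verification inside Part~1(b); everything else is a direct transcription of the $|Int(\textbf{u})|\ge 2$ arguments to the boundary value $p=1$.
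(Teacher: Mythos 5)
Your proof is correct and follows essentially the same route as the paper's: parts 1(a) and 2 are the paper's arguments verbatim, and part 1(b) rests on the same key point, namely that the hypothesis $\textbf{v}\cap Im_{0}(\textbf{u})\neq\emptyset$ supplies (via 1(a)) the point $r(Im_{0}(\textbf{u}))\in\textbf{v}$ on the far side of $\textbf{u}$, after which connectivity of $\textbf{v}$ inside the complement of $\textbf{u}$ forces $l(Im_{1}(\textbf{u}))\in\textbf{v}$. The only cosmetic difference is that you run 1(b) directly via an angular betweenness argument, whereas the paper argues by contradiction through sectors (deriving the impossible $S_{0}\subset\textbf{v}$); the substance is identical.
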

\noindent{}\textbf{Proof of 1(a):} When $len_h(\textbf{u}) > len_t(\textbf{u})$ then $l(Im_{0}(\textbf{u})))=$ $Im_{0}(r(\textbf{u}))$ $\in$ $Im_0(tail(\textbf{u}))$ $=S_{0}$ $=head(\textbf{u})$. Since $len_h(\textbf{u}) > len_t(\textbf{u})$ $=len(Im_0(\textbf{u} \cap tail(\textbf{u})))$. We infer that $l(Im_{0}(\textbf{u})) \in \textbf{u}$ and therefore $l(Im_{0}(\textbf{u})) \not \in \textbf{v}$. Therefore $r(Im_{0}(\textbf{u})) \in \textbf{v}$ i.e. $\textbf{v}$ is anticlockwise adjacent to $Im_{0}(\textbf{u})$.\\\\
\noindent{}\textbf{Proof of 1(b):} According to Table 1,
$Im_{1}(head(\textbf{u}))=S_{h'}$ where
$h'=(-p+2)$ $=1$ and $Im_{1}(tail(\textbf{u}))=S_{h''}$ where
$h''=p+1$ $=2$. So $Im_{1}(\textbf{u})$ goes from $S_{2}$ to $S_{1}$ in clockwise direction. Since $len(\textbf{v})
\geq len(\textbf{u})$ $=len(Im_{1}(\textbf{u}))$, $\textbf{v} \not \subset
Im_{1}(\textbf{u})$. Therefore \textbf{v} is either clockwise or anticlockwise adjacent to $Im_{1}(\textbf{u})$. If possible let $l(Im_{1}(\textbf{u})) \not \in \textbf{v}$. But since $\textbf{v} \cap Im_{0}(\textbf{u}) \neq \emptyset$, by case 1(a), $r(Im_0(\textbf{u})) \in \textbf{v}$ $\implies$ $Im_0(l(\textbf{u})) \in \textbf{v}$ $\implies$ $(\textbf{v} \cap Im_{0}(head(\textbf{u}))) \neq \emptyset$ $\implies$ $(\textbf{v} \cap S_{-1}) \neq \emptyset$. It follows that \textbf{v} intersects all the sectors from $S_{1}$ to $S_{-1}$ in clockwise direction. (Note that this cannot be in anticlockwise direction since in that case $l(Im_{1}(\textbf{u})) \in \textbf{v}$). Then $S_{0} \subset \textbf{v}$. But it is not possible since $S_{0} \cap \textbf{u} \neq \emptyset$ and $\textbf{u} \cap \textbf{v}=\emptyset$. Therefore $\textbf{v}$ is clockwise adjacent to $Im_{1}(\textbf{u})$.\\\\
\noindent{}\textbf{Proof of 2:} When $len_h(\textbf{u}) = len_t(\textbf{u})$ then $Im_0(head(\textbf{u}))$ $=S_{-1}$ $=tail(\textbf{u})$ and $Im_0(tail(\textbf{u}))$ $=S_{0}$ $=head(\textbf{u})$. Now since $len_h(\textbf{u}) = len_t(\textbf{u})$, we have $r(\textbf{u})= Im_{0}(l(\textbf{u}))$ and $l(\textbf{u})= Im_{0}(r(\textbf{u}))$. Hence $Im_{0}(\textbf{u})=\textbf{u}$ and therefore $\textbf{v} \cap Im_{0}(\textbf{u}) = \emptyset$ since $\textbf{u} \cap \textbf{v} = \emptyset$.\\

\subsection{Construction of interval graphs}
Let $G$ be a circular arc graph with maximum degree $\Delta < \left \lfloor \frac{n(\alpha-1)}{2\alpha}\right \rfloor$. In this section we will construct
$\alpha$ interval graphs $I_0, I_1, \ldots, I_{\alpha-1}$ such that
$G=I_0 \cap I_1 \cap \ldots \cap I_{\alpha-1}$. Now consider a system of reference axes based on $\alpha$. To define $I_j$ we
map each vertex $v \in V$ to an interval on $A_j$ by the mapping
$\forall u \in V, \ g_j(u)=[\inf Proj_j(\textbf{u}), \ \sup
Proj_j(\textbf{u})]$.
\begin{lemma}\label{supgraph}
For each interval graph $I_j$, $0 \leq j < \alpha$, $E(G) \subseteq
E(I_j)$.
\end{lemma}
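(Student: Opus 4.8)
The plan is to observe that perpendicular projection onto a line through the origin is a map that can only merge or preserve intersections, never destroy them, so an edge of $G$ (two arcs meeting on the circle) is automatically an edge of each $I_j$. The whole argument rests on producing a single point common to both projected intervals: if the arcs share a point on the circle, then the projection of that point witnesses the overlap of the two intervals on $A_j$.

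Concretely, I would argue as follows. Suppose $uv \in E(G)$. By the definition of a circular arc graph this means the arcs $\textbf{u}$ and $\textbf{v}$ intersect, so I would first fix a point $P \in \textbf{u} \cap \textbf{v}$ on the circle. Next, since $P \in \textbf{u}$, the definition of the projection of a point set gives $Proj_j(P) \in Proj_j(\textbf{u})$, and likewise $Proj_j(P) \in Proj_j(\textbf{v})$. Now I would invoke the definition $g_j(u)=[\inf Proj_j(\textbf{u}),\ \sup Proj_j(\textbf{u})]$: under the natural bijection of $A_j$ with $\mathbb{R}$, every element of the set $Proj_j(\textbf{u})$ lies between the infimum and the supremum of that set, so $g_j(u)$ is a closed interval containing all of $Proj_j(\textbf{u})$. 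In particular $Proj_j(P) \in g_j(u)$, and identically $Proj_j(P) \in g_j(v)$. Therefore $Proj_j(P) \in g_j(u) \cap g_j(v)$, the two intervals overlap, and hence $uv \in E(I_j)$. Since $u,v$ was an arbitrary edge of $G$ and $j$ an arbitrary index with $0 \leq j < \alpha$, this establishes $E(G) \subseteq E(I_j)$ for every $j$.

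I expect there to be essentially no obstacle here: this is the \emph{easy} containment asserting that each $I_j$ is a super graph of $G$. The genuine difficulty of the construction lies entirely in the \emph{converse} direction, namely proving that $G = I_0 \cap I_1 \cap \cdots \cap I_{\alpha-1}$ has no spurious edges, i.e. that whenever two non-adjacent arcs have overlapping projections on some axes, there is at least one axis on which their intervals are disjoint. That is precisely what the image analysis of Theorem \ref{Image_Arc} and Theorem \ref{Image_Arc2} is designed to supply, and it is where the assumption $\Delta < \lfloor \frac{n(\alpha-1)}{2\alpha}\rfloor$ (via Observation 1, bounding arc lengths and interception numbers) will be used. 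The only point requiring care in the present lemma is to apply the definition of $g_j$ correctly, so that membership of the single projected point $Proj_j(P)$ in the spanning interval is guaranteed; no appeal to connectedness of the projected arc is even needed.
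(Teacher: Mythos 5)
Your proposal is correct and is essentially identical to the paper's own (very short) proof: both take a point $P \in \textbf{u} \cap \textbf{v}$ and observe that $Proj_j(P)$ lies in $g_j(u) \cap g_j(v)$. You simply spell out the intermediate step that $g_j(u)$ contains all of $Proj_j(\textbf{u})$ by the $\inf$/$\sup$ definition, which the paper leaves implicit.
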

\begin{proof}
Let $(u,v) \in E(G)$. There is a point $P \in \textbf{u} \cap \textbf{v}$. Clearly $Proj_j(P)
\in g_j(u) \cap g_j(v)$ for $0 \leq j < \alpha$. So, $E(G) \subseteq
E(I_j)$ for $0 \leq j < \alpha$.
\end{proof}
\begin{lemma}\label{missing_edge}
If $(u,v) \not \in E(G)$ then there exists some $j$, $0 \leq j <
\alpha$, such that $(u, v) \notin E(I_j)$.
\end{lemma}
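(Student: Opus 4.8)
The plan is to recast the statement in a geometric form that the preceding image theorems are tailored for. First I would establish a \emph{bridge}: for the short arcs guaranteed by Observation 1, and any reference axis $A_j$, we have $g_j(u)\cap g_j(v)\neq\emptyset$ if and only if $\mathbf{v}\cap(\mathbf{u}\cup Im_j(\mathbf{u}))\neq\emptyset$. The reason is that the set of circle points whose $A_j$-projection lands in $g_j(u)=[\inf Proj_j(\mathbf{u}),\sup Proj_j(\mathbf{u})]$ is exactly $\mathbf{u}\cup Im_j(\mathbf{u})$ (this uses that $\mathbf{u}$ is shorter than a semicircle, so $Proj_j$ is at most two-to-one on it). Indeed, any value common to the closed intervals $g_j(u)$ and $g_j(v)$ is attained by a point of the connected arc $\mathbf{v}$, and by Fact~\ref{projpoint} such a point must coincide with a point of $\mathbf{u}$ or of its mirror $Im_j(\mathbf{u})$. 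Since $(u,v)\notin E(G)$ means $\mathbf{u}\cap\mathbf{v}=\emptyset$, this reduces the entire lemma to exhibiting one axis $A_j$ with $\mathbf{v}\cap Im_j(\mathbf{u})=\emptyset$, which is precisely the type of conclusion Theorems~\ref{Image_Arc} and~\ref{Image_Arc2} are about.

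Next I would put in place the two standing reductions under which those theorems are stated. Because the lemma only asks for \emph{some} $j$, I may assume $len(\mathbf{u})\leq len(\mathbf{v})$ (otherwise swap the roles of $u$ and $v$), and, using the rotational symmetry of the system of reference axes by multiples of $\frac{\pi}{\alpha}$—which merely permutes the interval graphs $I_0,\ldots,I_{\alpha-1}$—I may assume $m(\mathbf{u})=0$, so that the natural candidate separating axes are $A_0$ and $A_1$. The degenerate case $|Int(\mathbf{u})|=0$ (the arc sits inside one sector) is disposed of directly, since $Im_0(\mathbf{u})$ then lies in a different sector, away from $\mathbf{u}$. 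For $|Int(\mathbf{u})|\geq 1$ I would split according to Theorem~\ref{Image_Arc} (for $|Int(\mathbf{u})|\geq 2$) and Theorem~\ref{Image_Arc2} (for $|Int(\mathbf{u})|=1$), observing that the odd subcases with $len_h(\mathbf{u})=len_t(\mathbf{u})$ already yield $\mathbf{v}\cap Im_0(\mathbf{u})=\emptyset$ outright, so $A_0$ separates at once.

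The heart of the argument is the remaining situation, which I would handle by contradiction: suppose \emph{neither} $A_0$ nor $A_1$ separates, i.e. both $\mathbf{v}\cap Im_0(\mathbf{u})\neq\emptyset$ and $\mathbf{v}\cap Im_1(\mathbf{u})\neq\emptyset$. The theorems then force $\mathbf{v}$ to be anticlockwise adjacent to $Im_0(\mathbf{u})$ and clockwise adjacent to $Im_1(\mathbf{u})$; passing these through Lemma~\ref{Image2} gives $Im_0(l(\mathbf{u}))=r(Im_0(\mathbf{u}))\in\mathbf{v}$ and $Im_1(r(\mathbf{u}))=l(Im_1(\mathbf{u}))\in\mathbf{v}$. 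Reading off the sector numbers from Table 1 (with $p=\lceil |Int(\mathbf{u})|/2\rceil$), these two points sit just \emph{outside} $\mathbf{u}$ on opposite ends—one just clockwise of $tail(\mathbf{u})$ and one just anticlockwise of $head(\mathbf{u})$ (for instance in $S_{-(p+1)}$ and $S_{p+1}$ when $|Int(\mathbf{u})|$ is even). Since $\mathbf{u}$ fully covers every sector strictly between its two extreme sectors and $\mathbf{v}$ is disjoint from $\mathbf{u}$, the arc $\mathbf{v}$ cannot cross $\mathbf{u}$ and must instead contain nearly the whole complementary arc joining these two points the long way around. Bounding $p$ via $|Int(\mathbf{u})|\leq\alpha-1$ (Observation 1), this forces $len(\mathbf{v})>\Pi\left(\frac{\alpha-1}{\alpha}\right)$, contradicting Observation 1. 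Hence one of $A_0,A_1$ separates $\mathbf{u}$ and $\mathbf{v}$, giving the required $j$ with $(u,v)\notin E(I_j)$; together with Lemma~\ref{supgraph} this delivers $G=I_0\cap\cdots\cap I_{\alpha-1}$.

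The main obstacle is the conjunction of the bridge lemma and the final length estimate: one must verify that the projection preimage is \emph{exactly} $\mathbf{u}\cup Im_j(\mathbf{u})$ (which genuinely relies on the sub-semicircle length bound) and then locate $Im_0(l(\mathbf{u}))$ and $Im_1(r(\mathbf{u}))$ precisely enough—within one sector of the endpoints of $\mathbf{u}$—to push the forced length of $\mathbf{v}$ strictly beyond $\Pi\left(\frac{\alpha-1}{\alpha}\right)$. Two routine loose ends also remain: the symmetric subcase $len_h(\mathbf{u})<len_t(\mathbf{u})$, handled by reflecting the configuration and using $A_{-1}=A_{\alpha-1}$ in place of $A_1$, and the boundary case $|Int(\mathbf{u})|=0$.
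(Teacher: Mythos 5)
Your plan follows the paper's proof quite closely: the ``bridge'' (that $g_j(u)\cap g_j(v)\neq\emptyset$ forces $\textbf{v}$ to meet $\textbf{u}\cup Im_j(\textbf{u})$, via Fact~\ref{projpoint}) is exactly how the paper passes from interval intersection to arc geometry; the normalisations $len(\textbf{u})\leq len(\textbf{v})$ and $m(\textbf{u})=0$, the case split on $|Int(\textbf{u})|$ and its parity, the immediate dispatch of the balanced odd subcase via $Im_0(\textbf{u})=\textbf{u}$, the reflection trick for $len_h(\textbf{u})<len_t(\textbf{u})$, and the final contradiction by a length bound are all the paper's steps. However, two places in your write-up are genuine gaps rather than routine omissions. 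First, the case $Int(\textbf{u})=\emptyset$ is \emph{not} ``disposed of directly'': the fact that $Im_j(\textbf{u})$ lies in a sector away from $\textbf{u}$ only gives $Im_j(\textbf{u})\cap\textbf{u}=\emptyset$, which says nothing about $\textbf{v}\cap Im_j(\textbf{u})$; moreover $m(\textbf{u})$ is undefined here, so ``$Im_0$'' has no meaning. The paper's Case~1 runs a full argument of the same flavour as the main case: taking the two reference axes bounding the sector $S_k\supset\textbf{u}$, if both fail to separate then $\textbf{v}$ meets $S_{k-1}$ and $S_{k+1}$ but cannot pass through $S_k$, hence contains the other $2\alpha-3$ sectors and violates Observation~1.

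Second, the concluding estimate ``this forces $len(\textbf{v})>\Pi\left(\frac{\alpha-1}{\alpha}\right)$'' is precisely where the proof's weight sits, and you leave it as an acknowledged obstacle rather than establishing it. Counting only the sectors that $\textbf{v}$ fully contains gives $len(\textbf{v})\geq(2\alpha-2p-3)\frac{\Pi}{\alpha}$, which falls short of $(\alpha-1)\frac{\Pi}{\alpha}$ when $2p=\alpha-1$; this is why the paper adds Claims~1--3, which account for the four extreme sectors and yield $len(\textbf{u})+len(\textbf{v})\geq(2\alpha-2)\frac{\Pi}{\alpha}$, and then invokes $len(\textbf{v})\geq len(\textbf{u})$. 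Your ``complementary arc'' idea can in fact be made to close this more directly: the portion of $\textbf{v}$ running clockwise from $Im_0(l(\textbf{u}))$ (angle $-\theta_l$) to $Im_1(r(\textbf{u}))$ (angle $\frac{2\Pi}{\alpha}-\theta_r$) the long way around has length exactly $2\Pi\frac{\alpha-1}{\alpha}-len(\textbf{u})$, and $len(\textbf{u})<\Pi\frac{\alpha-1}{\alpha}$ then gives $len(\textbf{v})>\Pi\frac{\alpha-1}{\alpha}$ without any sector bookkeeping --- arguably cleaner than the paper's route --- but this computation (or the paper's Claims~1--3) must actually be carried out, and the positions of $Im_0(l(\textbf{u}))$ and $Im_1(r(\textbf{u}))$ must be verified to lie strictly outside $\textbf{u}$ in each subcase, which is what Theorems~\ref{Image_Arc} and~\ref{Image_Arc2} together with Table~1 are doing.
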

\begin{proof}
Since $(u,v) \not \in E(G)$, we have $\textbf{u} \cap \textbf{v} =
\emptyset$. Without loss of generality we can assume that
$len(\textbf{u}) \leq len(\textbf{v})$. We consider the following cases\\\\
$\noindent\textbf{Case 1:}$ ${\it Int(\textbf{u})=\emptyset}$. Then \textbf{u} is properly contained in some sector. Let
$\textbf{u} \subset S_k$. $A_{k_l}$ and $A_{k_r}$ be the left and
right reference axes of $S_k$ where $k_l=(k+1) \ mod \ \alpha$ and
$k_r=k  \ mod \ \alpha$. In this case we will show that either
$(u,v) \not \in E(I_{k_l})$ or $(u,v) \not \in E(I_{k_r})$. Suppose not.
Then $g_{k_l}(u)$ $\cap$ $g_{k_l}(v)$ $\neq$ $\emptyset$ and
therefore there exists some point $P_u$ $\in$ $\textbf{u}$ and some
point $P_v$ $\in$ $\textbf{v}$ such that
$Proj_{k_l}(P_u)=Proj_{k_l}(P_v)$. Then either $P_u=P_v$ or
$Im_{k_l}(P_u)=P_v$ by fact $\ref{projpoint}$. But as $\textbf{u}
\cap \textbf{v} = \emptyset$ we have $P_u \neq P_v$. Hence according
to Lemma $\ref{proj}$, $P_v$ must belong to the sector $(2((k+1) \ mod \
\alpha)-k-1) \ mod \ 2\alpha=$ $(k+1) \ mod \ 2\alpha$. Similarly
if $g_{k_r}(u)$ $\cap$ $g_{k_r}(v)$ $\neq$ $\emptyset$ then there
exists some point $Q_u$ $\in$ $\textbf{u}$ and some point $Q_v$
$\in$ $\textbf{v}$ such that $Proj_{k_r}(Q_u)=Proj_{k_r}(Q_v)$. For
similar reasons as above $Q_v$ must belong to the sector $((k-1) \
mod \ 2\alpha)$. Thus \textbf{v} intersects the sectors $S_{(k-1) \
mod \ 2\alpha}$ as well as $S_{(k+1) \ mod \ 2\alpha}$. But as
$\textbf{v}$ does not intersect $\textbf{u}$, $\textbf{v}$ cannot
pass through $S_k$. It follows that it intersects all the other
(2$\alpha$-1) sectors except $S_k$. In particular \textbf{v}
properly contains all the $(2\alpha-3)$ sectors other than $S_{(k-1)
\ mod \ 2\alpha}$, $S_k$ and $S_{(k+1) \ mod \ 2\alpha}$ and therefore $len(\textbf{v})$ $\geq$ $(2\alpha-3)\times sector \ length$ $=\frac{2\Pi(2\alpha-3)}{2\alpha}$ $\geq \frac{\Pi(\alpha-1)}{\alpha}$. Since $\alpha \geq 2$ this contradicts Observation 1, part 1.\\\\
$\noindent\textbf{Case 2:}$ ${\it When \ Int(\textbf{u}) \neq \emptyset \ and \ |Int(\textbf{u})| \ is \ even }$. If $\alpha=2$ then $len(\textbf{u}) < \frac{\Pi(\alpha-1)}{\alpha}=\frac{\Pi}{2}$ and therefore \textbf{u} cannot properly contain a sector, since a sector has length $\frac{\Pi}{2}$ in this case. Then $|Int(\textbf{u})| \leq 1$ contradicts the assumption. Therefore we can assume that $\alpha \geq 3$. Let $w=m(\textbf{u}) \ mod \ \alpha$ and $w'=(m(\textbf{u})+1) \ mod \ \alpha$, where $m(\textbf{u})$ is the median axis number of \textbf{u}. Our intention is to show that either $(u,v) \not \in
E(I_{w})$ or $(u,v) \not \in E(I_{w'})$. Now for the ease of presentation we will renumber our system of reference axes such that $m(\textbf{u})=0$. Note that it only corresponds to a rotation of the circular arc system in such a way that $H_{m(\textbf{u})}$ comes to $H_0$. After this transformation the interval graphs $I_w$ and $I_{w'}$ will also get \textit{renamed} to $I_0$ and $I_1$ respectively. So now we have only to prove that either $(u,v) \not \in
E(I_{0})$ or $(u,v) \not \in E(I_{1})$. Let $|Int(\textbf{u})|=2p$. Let $P^+$ and $P^-$ be the intersection point of the circle with $H_0$ and $H_{\alpha}$ (i.e. positive and negative x-axis) respectively. If $g_{0}(u)$ $\cap$
$g_{0}(v)$ $\neq$ $\emptyset$ then $\textbf{v} \cap
Im_{0}(\textbf{u}) \neq \emptyset$. Since $\textbf{v}
\cap \textbf{u} =\emptyset$ according to Theorem
$\ref{Image_Arc}$ part 1(a), \textbf{v} must be anticlockwise adjacent to
$Im_{0}(\textbf{u})$. Therefore
 $r(Im_{0}(\textbf{u})) \in \textbf{v}$ which implies
 that $Im_{0}(l(\textbf{u})) \in \textbf{v}$ by Lemma
$\ref{Image2}$. Similarly since $g_{1}(u)$ $\cap$
$g_{1}(v)$ $\neq$ $\emptyset$, according to Theorem
$\ref{Image_Arc}$ part 1(b), \textbf{v} must be clockwise adjacent to
$Im_{1}(\textbf{u})$. Therefore
 $l(Im_{1}(\textbf{u})) \in \textbf{v}$ which implies
 that $Im_{1}(r(\textbf{u})) \in \textbf{v}$ by Lemma
$\ref{Image2}$. But according to Table 1, $Im_{1}(r(\textbf{u})) \in$ $Im_{1}$ $(tail(\textbf{u}))$ $=
S_{p+1}$ and $Im_{0}(l(\textbf{u}))$ $\in$ $Im_{0}(head(\textbf{u}))$ $=
S_{-(p+1)}$. Therefore \textbf{v} intersects the sectors
$S_{-(p+1)}$ as well as $S_{p+1}$. Now since $(p+1) \leq (\alpha-1)$ for $\alpha \geq 3$ (see Observation 1, part 1), the sector $S_{p+1}$ belongs to the half circle above $A_0$ (i.e. x-axis). Similarly the sector $S_{-(p+1)}$ belongs to the half circle below $A_0$ (i.e. x-axis). It follows that the arc \textbf{v} should intersect $A_0$ either at $P^+$ or $P^-$. But in view of definition of median axis $P^+ \in \textbf{u}$. Since $\textbf{u} \cap \textbf{v} = \emptyset$ we have $P^- \in \textbf{v}$.
\begin{align}\label{v_sectors}
 So \ we \ infer \ that \ \textbf{v} \ intersects \ all \ the \ sectors \ from \ S_{-(p+1)} \ to \ S_{p+1} \ in \ the \ clockwise \notag \\ direction.
\end{align}
  Moreover we can infer that \textbf{v} properly contains all the sectors strictly in-between $S_{-(p+1)}$ and $S_{p+1}$ in the clockwise direction (if there is any such sector). Note that since $head(\textbf{u})=S_{p}$ and $tail(\textbf{u})=S_{-p}$, \textbf{u} properly contains all the sectors strictly in-between $S_{p}$ and $S_{-p}$ in the clockwise direction (if there is any such sector).
\begin{align}\label{coverage_even}
Thus \ \textbf{v} \cup \textbf{u} \ properly \ contains \ all \ the \ (2\alpha-4) \ sectors \ other \ than \ S_{p}, \ S_{-p}, \ S_{-(p+1)} \notag \\ and \ S_{p+1} \ namely \ head(\textbf{u}), \ tail(\textbf{u}), \ Im_{0}(head(\textbf{u})) \ and \ Im_{1}(tail(\textbf{u})).
\end{align}
\noindent{\textbf{Claim 1:}} If $|Int(\textbf{u})|$ is even
\begin{enumerate}
 \item ($len_h(\textbf{u}$)+len($\textbf{v} \cap S_{-(p+1)}$)) $\geq len(S_{-(p+1)})$.
 \item ($len_t(\textbf{u}$)+len($\textbf{v} \cap S_{p+1}$)) $\geq len(S_{p+1})$.
\end{enumerate}
\begin{proof}
Since $len_h(\textbf{u})$ $=len(\textbf{u}$ $\cap$ $head(\textbf{u}))$ $=len(Im_0(\textbf{u})$ $\cap$ $Im_0(head(\textbf{u})))$ and recalling from Table 1 that $Im_0(head(\textbf{u}))$ $=S_{-(p+1)}$ it is sufficient to show that $len(Im_0(\textbf{u})$ $\cap$ $S_{-(p+1)})$ $+len(\textbf{v}$ $\cap$ $S_{-(p+1)})$ $\geq$ $len(S_{-(p+1)})$. Let $P=Im_{0}(l(\textbf{u}))$ $=r(Im_{0}(\textbf{u}))$. We know that $P$ $\in$ $Im_{0}$ $(head(\textbf{u}))$ $=S_{-(p+1)}$. Let $P_L=l(S_{-(p+1)})$ and $P_R=$ $r(S_{-(p+1)})$. Note that $head(\textbf{u}) \cap \textbf{u}$ is an arc from l(\textbf{u}) to r(head(\textbf{u})) in the clockwise direction. Therefore $Im_{0}(\textbf{u} \cap head(\textbf{u}))$ $=Im_{0}(\textbf{u}) \cap Im_{0}(head(\textbf{u}))$ $= (Im_{0}(\textbf{u})$ $\cap$ $S_{-(p+1)})$ is the arc from $Im_{0}(l(\textbf{u}))$ $=r(Im_{0}$ $(\textbf{u}))=P$ to $Im_{0}(r(head(\textbf{u})))$ $=$ $l(Im_{0}$ $(head(\textbf{u})))$ $=$ $P_L$ within the sector $Im_{0}(head(\textbf{u}))$ $=S_{-(p+1)}$. Since $g_0(u) \cap g_0(v) \neq \emptyset$ by Theorem $\ref{Image_Arc}$ part 1(a), $r(Im_{0}(\textbf{u}))$ $=P \in \textbf{v}$. Moreover by statement $\eqref{v_sectors}$, \textbf{v} intersects all the sectors from $S_{-(p+1)}$ to $S_{p+1}$ in the clockwise direction. It follows that $P_R \in \textbf{v}$. In other words the arc from P to $P_R$ (in the clockwise direction) is contained in \textbf{v}. Thus we have ($len_h(\textbf{u}$)+len($\textbf{v} \cap S_{-(p+1)}$)) $\geq len(S_{-(p+1)})$.\\
 Proof of part 2 is similar as that of the part 1 except that instead of head(\textbf{u}) we have to use tail(\textbf{u}). Moreover instead of reference axis $A_{0}$ we have to use $A_{1}$ and sector $S_{p+1}$ should replace $S_{-(p+1)}$.
\end{proof}
Thus in view of statement $\eqref{coverage_even}$ and claim 1 and recalling that the length of one sector is $\frac{\Pi}{\alpha}$ we can say that len(\textbf{u}) + len(\textbf{v}) $\geq$
$(2\alpha-2)\frac{\Pi}{\alpha}$. Now since $len(\textbf{v}) \geq len(\textbf{u})$ we get $len(\textbf{v}) \geq
(\alpha-1)\frac{\Pi}{\alpha}$, a contradiction to Observation 1, part 1.\\\\
$\noindent\textbf{Case 3:}$ {\it If $|Int(\textbf{u})| \geq 1$ and $|Int(\textbf{u})|$ is odd.} As we did in case 2, we first renumber our system of reference axes such that $m(\textbf{u})=0$, i.e. we rotate the circular arc system in such a way that $H_{m(\textbf{u})}$ comes to $H_0$. Note that after the rotation $H_{m(\textbf{u})+1}$ will correspond to $H_1$ and $H_{m(\textbf{u})-1}$ will correspond to $H_{-1}$. We consider the following subcases.\\\\
$\noindent\textbf{Subcase 3.1:}$ {\it$len_h(\textbf{u})$ $=$
$len_t(\textbf{u})$}. By Theorem $\ref{Image_Arc}$ part 3, $Im_{0}(\textbf{u})$  =  \textbf{u} and hence
$g_{0}(u)$ $\cap$ $g_{0}(v)$ $=
\emptyset$. Therefore we have $(u,v) \not \in E(I_{0})$ as required.\\\\
$\noindent\textbf{Subcase 3.2:}$ {\it $len_h(\textbf{u})$ $\neq$ $len_t(\textbf{u})$}. Our proof requires that we consider two cases namely
\begin{enumerate}
 \item $len_h(\textbf{u})$ $>$ $len_t(\textbf{u})$
 \item $len_h(\textbf{u})$ $<$ $len_t(\textbf{u})$
\end{enumerate}
In the first case our intention is to show that either $(u,v) \not \in
E(I_{m(\textbf{u}) \ mod \ \alpha})$ or $(u,v) \not \in E(I_{(m(\textbf{u})+1) \ mod \ \alpha})$ and in the second case our intention is to show that either $(u,v) \not \in
E(I_{m(\textbf{u}) \ mod \ \alpha})$ or $(u,v) \not \in E(I_{(m(\textbf{u})-1) \ mod \ \alpha})$. In fact there is no need to analyze these two cases separately. If $len_h(\textbf{u})$ $<$ $len_t(\textbf{u})$ then we can do a further transformation to our circular arc system namely a reflection about the x-axis. Note that after the reflection the left and right endpoints of each arc gets interchanged (see Lemma $\ref{Image2}$). Therefore in this transformed circular arc system $len_h(\textbf{u})$ $>$ $len_t(\textbf{u})$. Also note that the numbering of the reference axes has also changed due to reflection: $A_0$ still corresponds to $A_0$ but $A_1$ corresponds to $A_{-1}$ and $A_{-1}$ corresponds to $A_1$. Thus the half axis that was originally numbered $m(\textbf{u})$ will be numbered 0 and $(m(\textbf{u})-1)$ will be numbered 1 after the rotation and reflection. Note that the interval graphs are also renamed after this transformation. Thus in both cases we need only to prove that either $(u,v) \not \in
E(I_{0})$ or $(u,v) \not \in E(I_{1})$. Suppose not. Then we claim the following\\\\
\noindent{}\textbf{Claim 2:} $\textbf{v}$ $\cup$ $\textbf{u}$ properly contains all the $(2\alpha-4)$ sectors other than head(\textbf{u}),  tail(\textbf{u}), $S_{p}$ and $S_{p+1}$.
\begin{proof}
 If $\alpha=2$, $(2\alpha-4)=0$ and the claim is trivially true. Thus we can assume $\alpha \geq 3$. Let $P^+$ and $P^-$ be the intersection point of the circle with $H_0$ and $H_{\alpha}$ (i.e. positive and negative x-axis) respectively. If $g_{0}(u)$ $\cap$
$g_{0}(v)$ $\neq$ $\emptyset$ then $\textbf{v} \cap
Im_{0}(\textbf{u}) \neq \emptyset$. Since $\textbf{v}
\cap \textbf{u} =\emptyset$, using Theorem $\ref{Image_Arc}$ part 2(a) and Theorem $\ref{Image_Arc2}$ part 1(a), we can conclude that \textbf{v} must be anticlockwise adjacent to
$Im_{0}(\textbf{u})$. So according to Lemma
$\ref{Image2}$, $Im_{0}(l(\textbf{u})) \in \textbf{v}$.
Similarly since $g_{1}(u)$ $\cap$
$g_{1}(v)$ $\neq$ $\emptyset$, by Theorem $\ref{Image_Arc}$ part 2(b) and Theorem $\ref{Image_Arc2}$ part 1(b) \textbf{v} must be clockwise adjacent to
$Im_{1}(\textbf{u})$. Hence according to
Lemma $\ref{Image2}$ $Im_{1}(r(\textbf{u})) \in
\textbf{v}$. Thus according to Table 1, \textbf{v} intersects the sectors
$S_{-p}$ as well as $S_{p+1}$. Now since $(p+1) \leq (\alpha-1)$ for $\alpha \geq 3$ (see Observation 1 part 2), the sector $S_{p+1}$ belongs to the half circle above $A_0$ (i.e. x-axis). Similarly the sector $S_{-p}$ belongs to the half circle below $A_0$ (i.e. x-axis). It follows that the arc \textbf{v} should intersect $A_0$ either at $P^+$ or $P^-$. But in view of definition of median axis $P^+ \in \textbf{u}$. Since $\textbf{u} \cap \textbf{v} = \emptyset$ we have $P^- \in \textbf{v}$. So we infer the following
\begin{align}\label{v_sectors1}
 If \ \alpha \geq 3 \ then \ \textbf{v} \ intersects \ all \ the \ sectors \ from \ S_{-p} \ to \ S_{p+1} \ in \ the \ clockwise \notag \\ direction.
\end{align}
Moreover we can infer that \textbf{v} properly contains all the sectors strictly in-between $S_{-p}$ and $S_{p+1}$ in clockwise direction (if there is any such sector). Also note that since $head(\textbf{u})=S_{p-1}$ and $tail(\textbf{u})=S_{-p}$, \textbf{u} properly contains all the sectors strictly  in-between $S_{p-1}$ to $S_{-p}$ in the clockwise direction (if there is any such sector). Therefore $\textbf{v}$ $\cup$ $\textbf{u}$ properly contains all the $(2\alpha-4)$ sectors other than $S_{p-1}$, $S_{-p}$, $S_{p}$ and $S_{p+1}$ namely head(\textbf{u}),  tail(\textbf{u}), $S_{p}$ and $S_{p+1}$.
\end{proof}
\noindent{}\textbf{Observation 2:} Note that statement (3) is valid even when $\alpha=2$. When $\alpha=2$ and $|Int(\textbf{u})|=1$, $p=1$. Since $g_{0}(u)$ $\cap$
$g_{0}(v)$ $\neq$ $\emptyset$ and $r(Im_0(\textbf{u})) \in S_{-1}$, $\textbf{v} \cap S_{-1} \neq \emptyset$. Similarly since $g_{1}(u)$ $\cap$
$g_{1}(v)$ $\neq$ $\emptyset$ and $l(Im_1(\textbf{u})) \in S_{2}$, $\textbf{v} \cap S_{2} \neq \emptyset$. Therefore \textbf{v} goes from $S_{-1}$ to $S_{2}$ in clockwise direction because otherwise $S_0 \subset \textbf{v}$, a contradiction to our assumption that $\textbf{u} \cap \textbf{v}=\emptyset$.\\\\
\noindent{\textbf{Claim 3:}} If $|Int(\textbf{u})|$ is odd and $len_h(\textbf{u})$ $>$ $len_t(\textbf{u})$
\begin{enumerate}
 \item ($len_h(\textbf{u}$)+len($\textbf{v} \cap S_{-p}$)) $\geq len(S_{-p})$.
 \item ($len_t(\textbf{u}$)+len($\textbf{v} \cap S_{p+1}$)) $\geq len(S_{p+1})$.
\end{enumerate}
\begin{proof}
Since $len_h(\textbf{u})$ $=len(\textbf{u}$ $\cap$ $head(\textbf{u}))$ $=len(Im_0(\textbf{u})$ $\cap$ $Im_0(head(\textbf{u})))$ and recalling from Table 1 that $Im_0(head(\textbf{u}))$ $=S_{-p}$ it is sufficient to show that $len(Im_0(\textbf{u})$ $\cap$ $S_{-p})$ $+len(\textbf{v}$ $\cap$ $S_{-p})$ $\geq$ $len(S_{-p})$. Let $P=Im_{0}(l(\textbf{u}))$ $=r(Im_{0}(\textbf{u}))$. We know that $P$ $\in$ $Im_{0}$ $(head(\textbf{u}))$ $=S_{-p}$. Let $P_L=l(S_{-p})$ and $P_R=$ $r(S_{-p})$. Note that $head(\textbf{u}) \cap \textbf{u}$ is an arc from l(\textbf{u}) to r(head(\textbf{u})) in the clockwise direction. Therefore $Im_{0}(\textbf{u} \cap head(\textbf{u}))$ $=Im_{0}(\textbf{u}) \cap Im_{0}(head(\textbf{u}))$ $= (Im_{0}(\textbf{u})$ $\cap$ $S_{-p})$ is the arc from $Im_{0}(l(\textbf{u}))$ $=r(Im_{0}$ $(\textbf{u}))=P$ to $Im_{0}(r(head(\textbf{u})))$ $=$ $l(Im_{0}$ $(head(\textbf{u})))$ $=$ $P_L$ within the sector $Im_{0}(head(\textbf{u}))$ $=S_{-p}$. Since $g_0(u) \cap g_0(v) \neq \emptyset$ by Theorem $\ref{Image_Arc}$ part 2(a) and Theorem $\ref{Image_Arc2}$ part 1(a), $r(Im_{0}(\textbf{u}))$ $=P \in \textbf{v}$. Moreover by statement $\eqref{v_sectors1}$ and Observation 2, \textbf{v} intersects all the sectors from $S_{-p}$ to $S_{p+1}$ in the clockwise direction. It follows that $P_R \in \textbf{v}$. In other words the arc from P to $P_R$ (in the clockwise direction) is contained in \textbf{v}. Thus we have ($len_h(\textbf{u}$)+len($\textbf{v} \cap S_{-p}$)) $\geq len(S_{-p})$.\\
 Proof of part(2) is the same as that of the part(1) except that instead of head(\textbf{u}) we have to use tail(\textbf{u}). Moreover instead of reference axis $A_{0}$ we have to use $A_{1}$ and sector $S_{p+1}$ should replace $S_{-p}$.\\\\
\end{proof}
Now in view of claim 2, claim 3 and recalling that the length of one sector is $\frac{\Pi}{\alpha}$ we can say that len(\textbf{u}) + len(\textbf{v}) $\geq$
$(2\alpha-2)\frac{\Pi}{\alpha}$. Now since $len(\textbf{v}) \geq len(\textbf{u})$ we get $len(\textbf{v}) \geq
(\alpha-1)\frac{\Pi}{\alpha}$, a contradiction to Observation 1, part 1.
\end{proof}
\noindent{}Combining Lemma $\ref{supgraph}$ and $\ref{missing_edge}$ we have the following Theorem.
\begin{theo}\label{box_deg_less_n/2}
For a circular arc graph $G$, $box(G) \leq \alpha$ when $\Delta < \left \lfloor
\frac{n(\alpha-1)}{2\alpha} \right \rfloor$.
\end{theo}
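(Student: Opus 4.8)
The plan is to show directly that $G$ coincides with the intersection of the $\alpha$ interval graphs $I_0, I_1, \ldots, I_{\alpha-1}$ constructed in the previous subsection, and then to read off the boxicity bound from the equivalence of Roberts (the well-known characterization stated at the start of Section~2) between $box(G)$ and the minimum size of an interval graph representation of $G$.

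First I would recall that each $I_j$ is genuinely an interval graph: by construction every vertex $u$ is assigned the closed interval $g_j(u)=[\inf Proj_j(\textbf{u}),\ \sup Proj_j(\textbf{u})]$ on the line $A_j$, and $I_j$ is precisely the intersection graph of this family of intervals. Hence $I_0, I_1, \ldots, I_{\alpha-1}$ is a candidate interval graph representation of $G$ using $\alpha$ interval graphs, and it remains only to verify that their intersection equals $G$.

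Next I would establish the two edge-set inclusions that together give $E(G)=\bigcap_{j=0}^{\alpha-1} E(I_j)$. One direction is immediate from Lemma~\ref{supgraph}: since $E(G)\subseteq E(I_j)$ for every $j$, we obtain $E(G)\subseteq \bigcap_{j} E(I_j)=E(I_0\cap I_1\cap\cdots\cap I_{\alpha-1})$. For the reverse inclusion I would argue contrapositively: if $(u,v)\notin E(G)$, then Lemma~\ref{missing_edge} supplies an index $j$ with $(u,v)\notin E(I_j)$, so $(u,v)\notin \bigcap_{j} E(I_j)$; this yields $\bigcap_{j} E(I_j)\subseteq E(G)$. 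Combining the two inclusions gives $G=I_0\cap I_1\cap\cdots\cap I_{\alpha-1}$.

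Finally, having exhibited an interval graph representation of $G$ using exactly $\alpha$ interval graphs, Roberts' equivalence immediately delivers $box(G)\leq\alpha$, which is the claim under the hypothesis $\Delta<\lfloor n(\alpha-1)/(2\alpha)\rfloor$ (this hypothesis is exactly what makes Observation~1, and therefore the whole case analysis of Lemma~\ref{missing_edge}, applicable). I do not expect any real obstacle at the level of this theorem itself: all the genuine difficulty has already been absorbed into Lemma~\ref{missing_edge}, whose delicate case analysis on the parity of $|Int(\textbf{u})|$ and on the comparison of $len_h(\textbf{u})$ with $len_t(\textbf{u})$ (routed through Theorems~\ref{Image_Arc} and~\ref{Image_Arc2}) is what forces every missing edge of $G$ to survive as a non-edge in at least one projection. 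The theorem is the bookkeeping step that assembles Lemmas~\ref{supgraph} and~\ref{missing_edge} through Roberts' characterization.
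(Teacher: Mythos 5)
Your proposal is correct and matches the paper exactly: the paper derives Theorem~\ref{box_deg_less_n/2} in one line by combining Lemma~\ref{supgraph} and Lemma~\ref{missing_edge} to get $G = I_0 \cap I_1 \cap \cdots \cap I_{\alpha-1}$ and then invoking Roberts' characterization of boxicity. You have simply spelled out the same bookkeeping step in more detail, correctly locating all the real work in Lemma~\ref{missing_edge}.
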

\subsection{Tightness Result}
It would have been nice if we could show a circular arc graph $G$
with $\Delta = \frac{n(\alpha-1)}{2\alpha}$ and $box(G) > \alpha$ in
order to demonstrate the tightness of the bound given in Theorem
$\ref{box_deg_less_n/2}$. Unfortunately we are unable to construct
such a graph. In this section we will show a circular arc graph $G$
with $box(G) > \alpha$ and $\Delta =
n\frac{(\alpha-1)}{2\alpha}+\frac{n}{2\alpha(\alpha+1)}+(\alpha+2)$.
Thus the upper bound on $\Delta$ cannot be raised by more than an
additive factor of $\frac{n}{2\alpha(\alpha+1)}+(\alpha+2)$ $\approx
\frac{n}{2\alpha^2}$ keeping $box(G) \leq \alpha$. We describe the graph $G$ below.\\
Let $V(G)$ be the disjoint union of 3 sets
of vertices namely $S_1,S_2$ and $S_3$ such that $|S_1|=2\alpha+2$, $|S_2|=2\alpha+2$ and $|S_3|=n-(4\alpha+4)$. (We will assume that n is divisible by $2\alpha+2$). Let $S_1=\{ u_0,u_1, \ldots,u_{2\alpha+1} \}$, $S_2=\{ v_0,v_1, \ldots,v_{2\alpha+1} \}$. Let $S_3$ contain the remaining $n-(4\alpha+4)$ vertices. We will partition the set $S_3$ further into $2\alpha+2$ subsets $H_0,H_1, \ldots,H_{2\alpha+1}$ where $|H_i|=\frac{n}{2\alpha+2}-2$ i.e. $S_3=H_0 \cup H_1 \cup \ldots \cup H_{2\alpha+1}$. The circular arc graph $G$ on the vertex set $V(G)=S_1 \cup S_2 \cup S_3$ is constructed as follows.\\
Let us take a circle. Let t=$2\alpha+2$. Place t points on the circle such that the distance between any adjacent pair of points is equal. Let these points be $p_0,p_1,\ldots,p_{t-1}$ in clockwise direction. For each i, $0 \leq i \leq (2\alpha+1)$, $u_i$ is mapped to the circular arc from $p_i$ to $p_{(i+\alpha) \ mod \ t}$  in the clockwise direction. Similarly for each i, $0 \leq i \leq (2\alpha+1)$, $v_i$ is mapped to the circular arc from $p_i$ to $p_{(i+1) \ mod \ t}$  in the clockwise direction. For each i, $0 \leq i \leq (2\alpha+1)$, the vertices in $H_i$ are mapped to arcs of sufficiently small length placed strictly in-between $p_i$ and $p_{(i+1) \ mod \ t}$  (i.e. strictly inside the arc corresponding to $v_i$) such that arcs in $H_i$ are pairwise non-intersecting.\\

\noindent $\textbf{Claim 1:}$ Maximum degree of $G$ is
$\frac{n\alpha}{2(\alpha+1)}+(\alpha+2)$=
$\frac{n(\alpha-1)}{2\alpha}+\frac{n}{2\alpha(\alpha+1)}+(\alpha+2)$.
\begin{proof}
Any vertex in set $S_1$ is connected to 2$\alpha$ vertices in set
$S_1$, ($\alpha$+2) vertices in set $S_2$ and $\alpha(\frac{n}{2(\alpha+1)}-2)$
vertices in set $S_3$. So maximum degree of any vertex in $S_1$ is
$\frac{n\alpha}{2(\alpha+1)}+(\alpha+2)$.\\
Any vertex in set $S_2$ is connected to ($\alpha$+2) vertices in set $S_1$, 2 vertices in set $S_2$
and $(\frac{n}{2(\alpha+1)}-2)$ vertices in set $S_3$. So maximum degree of
any vertex in $S_2$ is $\frac{n}{2(\alpha+1)}+\alpha+2$.\\
Any vertex in set $S_3$ is connected to ($\alpha$-1) vertices in set $S_1$
and 1 vertex in set $S_2$. So maximum degree of any vertex in $S_3$
is $\alpha$.\\
So maximum degree of $G$ is $\frac{n\alpha}{2(\alpha+1)}+(\alpha+2)$.
\end{proof}
\noindent $\textbf{Claim 2:}$ $box(G) > \alpha$.
\begin{proof}
It is easy to see that in $G$, $S_1$ induces a subgraph isomorphic to $\overline{(\alpha+1)K_2}$. (See the circular arc construction of $\overline{(\frac{n}{2})K_2}$ given in the introduction). From $\cite{Roberts}$, box($\overline{(\alpha+1)K_2}$) $=
\alpha+1$ and therefore box(G) $> \alpha$.
\end{proof}
So there exists a graph on n vertices with maximum degree
$\frac{n\alpha}{2(\alpha+1)}+(\alpha+2)$ and boxicity $> \alpha$.
\section{An upper bound based on minimum overlap set}\label{GeneralConstruction}
Cardinality of minimum overlap set is a parameter (see definition $\ref{Minimum overlap point}$) one encounters frequently in the study of circular arc graphs. For example let $F$ be the family of arcs in a circular arc graph $G$ then $\chi(F) \leq r_{sup}(F) +r_{inf}(F)$ $\cite{Hadwiger}$. In the following theorem we relate $box(G)$ with $r_{inf}(G)$. 
\begin{theo}\label{box_general}
For a circular arc graph $G$, $box(G) \leq r_{inf}(G) + 1$ where
$r_{inf}(G)$ is the cardinality of minimum overlap set with respect to any circular arc representation of $G$.
\end{theo}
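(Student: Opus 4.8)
The plan is to invoke the interval-graph-representation characterization of boxicity (Roberts' Lemma in Section~2): it suffices to build $r+1$ interval graphs $I_0, I_1, \ldots, I_r$, where $r = r_{inf}(G)$, with $G = I_0 \cap I_1 \cap \cdots \cap I_r$. I would first fix a circular arc representation $F$ of $G$ attaining $r_{inf}(F) = r_{inf}(G)$, take a minimum overlap point $P_{inf}$ (Definition~\ref{Minimum overlap point}), and let $\mathcal{O}(P_{inf}) = \{\textbf{a}_1, \ldots, \textbf{a}_r\}$ be the $r$ arcs through it (Definition~\ref{Overlap set}). The structural fact driving everything is that $\mathcal{O}(P_{inf})$ induces a clique, so \emph{every} non-edge of $G$ has at most one endpoint among $\textbf{a}_1, \ldots, \textbf{a}_r$. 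This partitions the non-edges: those with no endpoint in $\mathcal{O}(P_{inf})$ will be witnessed by $I_0$, and for each $i$ the non-edges incident to $\textbf{a}_i$ will be witnessed by $I_i$.

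To build $I_0$, cut the circle at $P_{inf}$ to obtain a line. Every arc avoiding $P_{inf}$ maps to its natural interval, while each $\textbf{a}_i$ is assigned the whole line (a universal vertex). Then $E(G) \subseteq E(I_0)$ (intersecting arcs avoiding $P_{inf}$ give intersecting intervals, and the universal arcs meet everything), and any non-edge between two arcs that both avoid $P_{inf}$ is detected, since their disjoint arcs yield disjoint intervals. To build $I_i$, I would instead cut at $l(\textbf{a}_i)$, so that $\textbf{a}_i$ becomes the proper interval $[0,\, len(\textbf{a}_i)]$ anchored at the cut. The crucial observation is that a non-neighbour $\textbf{w}$ of $\textbf{a}_i$ satisfies $\textbf{w}\cap\textbf{a}_i=\emptyset$; since $l(\textbf{a}_i)\in\textbf{a}_i$, the arc $\textbf{w}$ cannot contain the cut point, so it maps to a proper interval lying strictly to the right of $[0,\,len(\textbf{a}_i)]$ and is therefore disjoint from $\textbf{a}_i$ in $I_i$. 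Any arc that properly contains $l(\textbf{a}_i)$ wraps around the cut, and I assign each such arc the whole line. Such an arc meets $\textbf{a}_i$ at $l(\textbf{a}_i)$, so it is a genuine neighbour of $\textbf{a}_i$, and making it universal preserves all edges of $G$ incident to $\textbf{a}_i$.

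The verification then splits into two routine checks. First, $E(G)\subseteq E(I_j)$ for every $j$: intersecting arcs that both avoid the cut point map to intersecting intervals, and any wrapping arc is universal, so no edge of $G$ is lost. Second, every non-edge is witnessed: a non-edge with both endpoints outside $\mathcal{O}(P_{inf})$ is caught by $I_0$, and a non-edge $(\textbf{a}_i,\textbf{w})$ is caught by $I_i$ as shown above. Since $\mathcal{O}(P_{inf})$ is a clique these two cases exhaust all non-edges, so $G=\bigcap_{j=0}^{r}I_j$ and hence $box(G)\le r+1=r_{inf}(G)+1$. (The degenerate case $r_{inf}(G)=0$, where $G$ is already an interval graph, gives $box(G)\le 1$, consistent with the bound.)

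The main obstacle I anticipate is justifying that the universal (wrapping) arcs introduced in each $I_i$ do not corrupt the construction: they create spurious edges, and one must argue these are harmless. The resolution is exactly the clique observation on $\mathcal{O}(P_{inf})$: the only non-edges $I_i$ is \emph{responsible} for witnessing are those incident to $\textbf{a}_i$, and those are captured regardless of how the remaining arcs are drawn, so any spurious edge of $I_i$ is a non-edge that is already witnessed by some other $I_j$. Making this responsibility-partition precise, and confirming that a non-neighbour of $\textbf{a}_i$ genuinely stays clear of the cut at $l(\textbf{a}_i)$ (using the distinct-endpoints normalization of Lemma~\ref{distinct endpoints}), is the heart of the argument.
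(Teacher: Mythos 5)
Your proposal is correct and follows essentially the same route as the paper: your $I_0$ is exactly the paper's $I_{r_{inf}+1}$ (cut the circle at $P_{inf}$ and make the arcs through $P_{inf}$ universal), and the non-edges are partitioned among the $r_{inf}+1$ interval graphs via the same key observation that $\mathcal{O}(P_{inf})$ induces a clique. The only difference is cosmetic: for the per-arc graphs $I_i$ the paper uses the simpler three-interval assignment $[0,1]$, $[1,2]$, $[2,3]$ determined solely by adjacency to $w_i$, whereas you cut the circle at $l(\textbf{a}_i)$; both constructions witness exactly the non-edges incident to the $i$th arc of the overlap set.
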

\noindent {}From now on we will use $r_{inf}$ instead of $r_{inf}(G)$. Let $\mathcal{O}(P_{inf})$= $\{\textbf{w}_1,\textbf{w}_2,\ldots,\textbf{w}_{r_{inf}} \}$. We construct $r_{inf}+1$ interval super graphs of $G$ say
$I_1, I_2,\ldots, I_{r_{inf}+1}$ as follows.\\

\noindent \textbf{Construction of $I_i$ for $1 \leq i \leq
r_{inf}$:} For each $\textbf{w}_i$ we will construct an interval graph $I_i$. To construct $I_i$
we map each vertex $v \in V(G)$ to an interval on the real line by the mapping\\
\vspace{-.6cm}
\begin{eqnarray*}
  g_i(v) &=& [0, 1] \ \ \ \ \ \ \ \ \ \ if \ v = w_i  \\
  	 &=& [1, 2] \ \ \ \ \ \ \ \ \ \ if \ v \in N_G(w_i).\\
  	 &=& [2, 3] \ \ \ \ \ \ \ \ \ \ if \ v \in V(G) - (N_G(w_i) \cup \{w_i\}).
\end{eqnarray*}

\noindent{\bf Construction of $I_{r_{inf}+1}$}: Recall that according to definition $\ref{clockwise ordering}$,
$\sigma$ is a clockwise ordering starting from $P_{inf}$. So
$\sigma(l(\textbf{v})) < \sigma(r(\textbf{v}))$ when $\textbf{v}
\not \in \mathcal{O}(P_{inf})$. We define the mapping as follows:

\vspace{-.6cm}

\begin{eqnarray*}
  g_{r_{inf}+1}(v) &=& [1, 2n] \ \ \ \ \ \ \ \ \ if \ \textbf{v} \in \mathcal{O}(P_{inf}). \\
             &=& [\sigma(l(\textbf{v})),\sigma(r(\textbf{v}))] \ \ \ \ \   otherwise. \\
\end{eqnarray*}

%
%
\begin{lemma}
For each interval graph $I_j$ where $1 \leq j \leq r_{inf}+1$, $E(G)
\subseteq E(I_j)$.
\end{lemma}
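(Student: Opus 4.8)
The plan is to verify, separately for the first $r_{inf}$ graphs and for $I_{r_{inf}+1}$, that every edge $(u,v) \in E(G)$ is carried to a pair of intersecting intervals. Since $E(G) \subseteq E(I_j)$ is exactly the assertion that $g_j(u) \cap g_j(v) \neq \emptyset$ whenever $(u,v) \in E(G)$, it suffices to fix an arbitrary edge and check this in each $I_j$.

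First I would dispose of $I_i$ for $1 \leq i \leq r_{inf}$. Among the three intervals $[0,1]$, $[1,2]$, $[2,3]$ used in the construction, the only disjoint pair is $[0,1]$ and $[2,3]$; every other pair shares an endpoint. The interval $[0,1]$ is assigned only to $w_i$, and $[2,3]$ only to vertices of $V(G) - (N_G(w_i) \cup \{w_i\})$. Hence the sole non-adjacencies created in $I_i$ are between $w_i$ and its non-neighbours in $G$. If $(u,v) \in E(G)$, such a pair cannot arise: either neither of $u,v$ equals $w_i$, so both are assigned intervals inside $[1,2] \cup [2,3]$, which always meet; or one of them is $w_i$, in which case the other lies in $N_G(w_i)$ and receives $[1,2]$, which meets $[0,1]$. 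Either way $g_i(u) \cap g_i(v) \neq \emptyset$.

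For $I_{r_{inf}+1}$ I would split into three subcases according to how many of $\textbf{u}, \textbf{v}$ lie in $\mathcal{O}(P_{inf})$. If both do, each vertex is assigned $[1, 2n]$ and the intervals coincide. If exactly one arc, say $\textbf{u}$, lies in $\mathcal{O}(P_{inf})$, then $u$ receives $[1,2n]$ while $v$ receives $[\sigma(l(\textbf{v})), \sigma(r(\textbf{v}))]$; since $\sigma$ takes values in $\{1, \ldots, 2n\}$, this latter interval is contained in $[1,2n]$, so the two meet. The remaining subcase, where neither $\textbf{u}$ nor $\textbf{v}$ lies in $\mathcal{O}(P_{inf})$, is the crux.

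In this last subcase both arcs avoid $P_{inf}$, so each is traversed clockwise from its left endpoint to its right endpoint without crossing $P_{inf}$; equivalently $\sigma(l(\textbf{u})) < \sigma(r(\textbf{u}))$ and $\sigma(l(\textbf{v})) < \sigma(r(\textbf{v}))$, and the interior points of each arc occupy exactly the clockwise positions strictly between those of its two endpoints. This is the formal counterpart of the remark in the introduction that a circular arc family missing a point is topologically an interval family. Given a point $Q \in \textbf{u} \cap \textbf{v}$ guaranteed by $(u,v) \in E(G)$, its clockwise position measured from $P_{inf}$ lies between $\sigma(l(\textbf{u}))$ and $\sigma(r(\textbf{u}))$ and simultaneously between $\sigma(l(\textbf{v}))$ and $\sigma(r(\textbf{v}))$, so it witnesses a common point of $[\sigma(l(\textbf{u})), \sigma(r(\textbf{u}))]$ and $[\sigma(l(\textbf{v})), \sigma(r(\textbf{v}))]$. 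I expect the main obstacle to be making this positional argument precise, since $\sigma$ is defined only on endpoints; one must argue cleanly that cutting the circle at $P_{inf}$ converts the two arcs into genuine intervals whose overlap is equivalent to the arcs' intersection on the circle.
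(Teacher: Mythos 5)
Your proposal is correct and follows essentially the same route as the paper: for $1 \leq i \leq r_{inf}$ the paper likewise observes that $N_G(w_i)\cup\{w_i\}$ and $V(G)-\{w_i\}$ each induce cliques in $I_i$ (the contrapositive of your "only disjoint pair is $[0,1]$ and $[2,3]$" observation), and for $I_{r_{inf}+1}$ it does the same case split on membership in $\mathcal{O}(P_{inf})$. The only difference is in your last subcase: the "obstacle" you flag disappears if, instead of taking an arbitrary common point $Q$, you note that two intersecting arcs neither of which contains $P_{inf}$ must be such that one contains the \emph{left endpoint} of the other, giving directly $\sigma(l(\textbf{v})) \leq \sigma(l(\textbf{u})) \leq \sigma(r(\textbf{v}))$ or $\sigma(l(\textbf{u})) \leq \sigma(l(\textbf{v})) \leq \sigma(r(\textbf{u}))$, which is exactly how the paper concludes.
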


\begin{proof}
We consider the following two cases\\
\noindent{\bf Case 1:} When $1 \leq j \leq r_{inf}$. It is easy to see that for all $x \in N_{G}(w_i) \cup \{w_i\}$, $1
\in g_i(x)$. So $N_{G}(w_i) \cup \{w_i\}$ induces a clique in
$I_i$. Also for all $x \in V(G) - \{w_i\}$, $2 \in g_i(x)$. So $V(G) - \{w_i\}$ induces a clique in $I_i$. Therefore we infer that
$E(G) \subseteq E(I_i)$ for each $i$ $1 \leq i \leq r_{inf}$.\\
\noindent{\bf Case 2:} When $j=r_{inf}+1$. For any edge $(u,v) \in E(G)$ we consider the following two cases\\
$\noindent\textbf{Subcase 2.1:}$ $\textbf{u}$ $\in \mathcal{O}(P_{inf})$ or $\textbf{v}$ $\in
\mathcal{O}(P_{inf})$. Without loss of generality let $\textbf{u} \in \mathcal{O}(P_{inf})$. Then note that $g_{r_{inf}+1}(u)=[1,2n]$. Clearly $1 \leq \sigma(l(\textbf{v}))
\leq \sigma(r(\textbf{v})) \leq 2n$. Therefore $g_{r_{inf}+1}(u)$ $\cap$
$g_{r_{inf}+1}(v)$ $\neq$ $\emptyset$.\\
$\noindent\textbf{Subcase 2.2:}$ $\textbf{u}$ $\not \in \mathcal{O}(P_{inf})$ and $\textbf{v}$
$\not \in \mathcal{O}(P_{inf})$. Then either $\sigma(l(\textbf{v})) \leq $
$\sigma(l(\textbf{u}))\leq$ $\sigma(r(\textbf{v}))$ or $\sigma(l(\textbf{u})) \leq $
$\sigma(l(\textbf{v}))\leq$ $\sigma(r(\textbf{u}))$. So $g_{r_{inf}+1}(u)$ $\cap$
$g_{r_{inf}+1}(v)$ $\neq$ $\emptyset$.\\
Therefore $(u,v) \in E(I_{r_{inf}+1})$.
\end{proof}


\begin{lemma}\label{gen_constr_missing_edge_lemma}
For any $(x, y) \notin E(G)$, $\exists$ $j$, $1 \leq j \leq
r_{inf}+1$, such that $(x, y) \notin E(I_j)$.
\end{lemma}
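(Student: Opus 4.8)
The plan is to show that for any non-edge $(x,y) \notin E(G)$, at least one of the $r_{inf}+1$ interval graphs fails to contain the edge. I would split into two cases depending on whether the non-adjacent pair involves a vertex of the reference overlap set $\mathcal{O}(P_{inf})$.

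First I would handle the case where at least one of $\textbf{x},\textbf{y}$ lies in $\mathcal{O}(P_{inf})$, say $\textbf{x} = \textbf{w}_i$ for some $i$. Since $(x,y)\notin E(G)$, the vertex $y$ is a non-neighbour of $w_i$, so by the definition of the map $g_i$ we have $g_i(w_i) = [0,1]$ while $g_i(y)=[2,3]$ (as $y \in V(G)-(N_G(w_i)\cup\{w_i\})$). These intervals are disjoint, hence $(x,y)\notin E(I_i)$, and we are done with this case. The same argument works if $\textbf{y}\in\mathcal{O}(P_{inf})$ by symmetry, picking the corresponding coordinate graph $I_i$.

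The remaining case is when both $\textbf{x}\notin\mathcal{O}(P_{inf})$ and $\textbf{y}\notin\mathcal{O}(P_{inf})$. Here I would use the coordinate graph $I_{r_{inf}+1}$, whose intervals are $g_{r_{inf}+1}(x)=[\sigma(l(\textbf{x})),\sigma(r(\textbf{x}))]$ and similarly for $y$, genuine intervals since $\sigma(l(\cdot)) < \sigma(r(\cdot))$ for arcs avoiding $P_{inf}$. Because neither arc covers $P_{inf}$, each behaves like an ordinary interval in the linear order $\sigma$ obtained by cutting the circle at $P_{inf}$. Since $\textbf{x}\cap\textbf{y}=\emptyset$ on the circle, and neither arc wraps around the cut point $P_{inf}$, the two corresponding intervals in the $\sigma$-order must be disjoint: if they overlapped in the linear order, the underlying arcs would intersect. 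Hence $g_{r_{inf}+1}(x)\cap g_{r_{inf}+1}(y)=\emptyset$, giving $(x,y)\notin E(I_{r_{inf}+1})$.

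The main obstacle is the last case: I must argue carefully that cutting the circle at the minimum overlap point $P_{inf}$ turns arcs avoiding $P_{inf}$ into honest intervals whose intersection pattern exactly matches that of the arcs. The key point is that $P_{inf}$ is not covered by either $\textbf{x}$ or $\textbf{y}$, so neither arc is ``split'' by the cut; thus disjointness of arcs is equivalent to disjointness of the images under $\sigma$. I would make this precise by noting that an arc $\textbf{v}$ with $\textbf{v}\notin\mathcal{O}(P_{inf})$ occupies a contiguous block $[\sigma(l(\textbf{v})),\sigma(r(\textbf{v}))]$ of the ordering, and two such contiguous blocks meet if and only if the arcs meet, which is the standard equivalence between non-wrapping arcs and intervals.
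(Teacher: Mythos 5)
Your proposal is correct and follows essentially the same two-case argument as the paper: using $I_i$ with the $[0,1]$ versus $[2,3]$ separation when one arc lies in $\mathcal{O}(P_{inf})$, and using $I_{r_{inf}+1}$ with the observation that disjoint arcs avoiding $P_{inf}$ become disjoint contiguous blocks in the clockwise ordering $\sigma$. The extra justification you give for the second case is a correct elaboration of the step the paper states tersely.
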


\begin{proof}
Suppose $(x, y) \notin E(G)$.

\noindent{\bf Case 1:} Either $ \textbf{x} \in \mathcal{O}(P_{inf})$ or $\textbf{y}
\in \mathcal{O}(P_{inf})$. Without loss of generality we can assume that $ \textbf{x} \in \mathcal{O}(P_{inf})$. Then $x = w_i$,
for some $i$ where $1 \leq i \leq r_{inf}$. In $I_i$ as $y \notin
N_G(w_i)$, we have $g_i(x)= g_i(w_i) = [0, 1]$ and $g_i(y) = [2, 3]$. Therefore $g_i(x) \cap
g_i(y) = \emptyset$. Hence $(x, y) \notin E(I_i)$.

\noindent{\bf Case 2:} $\textbf{x} \not \in \mathcal{O}(P_{inf})$ and $\textbf{y} \not
\in \mathcal{O}(P_{inf})$. As $(x,y) \not \in E(G)$ we have either 
$\sigma(r(\textbf{x})) < \sigma(l(\textbf{y}))$ or $\sigma(r(\textbf{y})) < \sigma(l(\textbf{x}))$. Therefore $(x,y) \not \in
E(I_{r_{inf}+1})$.
\end{proof}

\noindent{By combining the above two lemmas we get $E(G) = E(I_1) \cap E(I_2)
\cap \cdots \cap E(I_{r_{inf}+1})$.}

\subsection{Tightness result} Let $G=\overline{(\frac{n}{2})K_2}$, the complement of the perfect matching on n vertices (We will assume that n is even). According to the circular arc representation of $\overline{(\frac{n}{2})K_2}$ described in the introduction, it is easy to see that $|\mathcal{O}(P_{inf})| =
\frac{n}{2} -1$ in $G$. So $box(G) \leq \frac{n}{2}$ by Theorem
\ref{box_general}. But it is known that $box(G) = \frac{n}{2}$
\cite{Roberts}. So the upper bound for boxicity given in Theorem
\ref{box_general} is tight for $\overline{(\frac{n}{2})K_2}$.

\section{An upper bound based on circular cover number}
\begin{defi}\label{circular cover}
For a family $F$ of arcs, a $\textbf{circular \ cover}$ is defined as a subset of arcs of $F$ that can cover the circle. The $\textbf{circular \ cover \ number}$ $L(F)$ of the family of arcs $F$ is defined as the cardinality of the minimum circular cover. If there exists no circular cover for $F$, then $L(F)=\infty$.
\end{defi}
\begin{defi}
 \textbf{Maximum} \textbf{circular cover number} $L_{max}(G)$ is defined as the maximum value of $L(F)$ obtained over all possible family of arcs $F$ that can represent $G$.
\end{defi}
\noindent{}\textbf{Remark:} It may be noted that in many cases getting a circular arc family $F$ that can represent $G$, with higher value of $L(F)$ is preferable. For example see $\cite{tucker}$, where the upper bound shown for the chromatic number is smaller when $L(F)$ is higher.\\\\  
Like the parameter $r_{inf}$, the circular cover number $L(F)$ is also a parameter that appears often in the circular arc graph literature. In this section we will relate $box(G)$ to the maximum circular cover number $L_{max}(G)$.

\begin{lemma}\label{missing point}
There exists $3$ points $P_0$, $P_1$ and $P_2$ on the circle such that given any vertex $v \in V(G)$, \textbf{v} belongs to at most one of the three sets $\mathcal{O}(P_0)$, $\mathcal{O}(P_1)$ and $\mathcal{O}(P_2)$.
\end{lemma}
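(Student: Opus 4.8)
The plan is to restate the desired conclusion geometrically and then produce the three points by a greedy ``reach'' construction that exploits the hypothesis $L(F)>4$, where $F$ is the fixed representation of $G$ of large circular cover number that we are working with. Write $a\prec b$ to mean that $a$ precedes $b$ in the clockwise order $\sigma$. Observe first that, listing $P_0,P_1,P_2$ in clockwise order and letting $g_0=(P_0,P_1)$, $g_1=(P_1,P_2)$, $g_2=(P_2,P_0)$ be the three open clockwise gaps they cut the circle into, the condition \emph{no vertex arc lies in two of} $\mathcal{O}(P_0),\mathcal{O}(P_1),\mathcal{O}(P_2)$ is equivalent to \emph{no single arc of $F$ contains a whole closed gap} $\overline{g_i}$. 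Indeed, an arc $\textbf{v}$ is a connected proper sub-arc of the circle; if it contains two of the three points then, being connected, it contains one of the two clockwise paths between them, i.e. it contains at least one closed gap; conversely an arc containing a closed gap contains that gap's two endpoints, hence two of the $P_i$. So it suffices to place three points so that each closed clockwise gap fails to sit inside any single arc of $F$.

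For a covered point $x$ I would define its \textbf{reach} $\phi(x)$ to be the clockwise-farthest right endpoint $r(\textbf{v})$ over all arcs $\textbf{v}\in F$ with $x\in\textbf{v}$; since $F$ is finite the supremum is attained by some arc $B_x$, and under the general-position assumptions of Lemma~\ref{distinct endpoints} we have $x\prec\phi(x)$ strictly. A closed clockwise gap $[x,y]$ lies inside some arc iff $y\preceq\phi(x)$, so such a gap is \emph{un}coverable exactly when $\phi(x)\prec y$. Now iterate: fix any covered point $x_0$ and set $x_{i+1}=\phi(x_i)$. The arcs $B_{x_0},B_{x_1},B_{x_2}$ cover $[x_0,x_1]\cup[x_1,x_2]\cup[x_2,x_3]=[x_0,x_3]$, so if the three-step reach were to wrap around ($x_3\succeq x_0$) the circle would be covered by three arcs, forcing $L(F)\le 3$. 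Since $L(F)>4$ this cannot happen; thus $x_0\prec x_1\prec x_2\prec x_3\prec x_0$ in clockwise order with room to spare. (If $F$ fails to cover the circle then $L(F)=\infty$ and some point lies in no arc; a minor variant of the same argument handles that case, so I may assume the circle is covered.)

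I would then set $P_0=x_0$ and choose $P_1,P_2$ to lie just clockwise of $x_1,x_2$, say $P_1=x_1^{+}$ and $P_2=x_2^{+}$, where the perturbation is taken smaller than the distance from $x_1$ (resp. $x_2$) to the nearest arc-endpoint clockwise of it. With the perturbation this small, every arc containing $x_i^{+}$ also contains $x_i$, whence $\phi(x_i^{+})\preceq\phi(x_i)=x_{i+1}$. Now check the three gaps: $[P_0,P_1]=[x_0,x_1^{+}]$ is uncoverable since $\phi(x_0)=x_1\prec x_1^{+}=P_1$; $[P_1,P_2]=[x_1^{+},x_2^{+}]$ is uncoverable since $\phi(P_1)\preceq x_2\prec x_2^{+}=P_2$; and the wrap-around gap $[P_2,P_0]=[x_2^{+},x_0]$ is uncoverable since $\phi(P_2)\preceq x_3\prec x_0=P_0$. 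By the equivalence of the first paragraph, no arc of $F$ contains two of $P_0,P_1,P_2$, i.e. each vertex arc lies in at most one of $\mathcal{O}(P_0),\mathcal{O}(P_1),\mathcal{O}(P_2)$, which is the assertion of the lemma.

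The main obstacle, and the part deserving the most care, is making the reach bookkeeping fully rigorous: establishing cleanly that $k$ reach-steps wrapping around yields a circular cover of size $k$ (so that $L(F)>4$ forbids a short wrap), proving the monotonicity bound $\phi(x^{+})\preceq\phi(x)$ by choosing the perturbation below the minimum endpoint spacing guaranteed by Lemma~\ref{distinct endpoints}, and handling the endpoint subtleties (strict versus weak containment at the points $x_i$ and at the wrap-around gap $[x_2^{+},x_0]$). These are exactly the places where an off-by-one in the step count or a careless infinitesimal could creep in; everything else is routine once the equivalence ``two of the points in one arc $\Leftrightarrow$ one gap fully covered'' is in hand.
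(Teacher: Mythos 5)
Your proposal is correct, but it takes a genuinely different route from the paper. The paper starts from a \emph{minimum circular cover} $\{\mathcal{A}_1,\ldots,\mathcal{A}_{L_{max}(G)}\}$, sets $P_0=l(\mathcal{A}_1)$ and $P_1=r(\mathcal{A}_2)$, and argues by cover-swapping: an arc lying in both $\mathcal{O}(P_0)$ and $\mathcal{O}(P_1)$, or a clockwise adjacency between an arc through $P_1$ and an arc through $P_0$, would produce a circular cover of cardinality at most $4$, contradicting $L_{max}(G)>4$; the existence of $P_2$ is then deduced (somewhat tersely) from the non-adjacency statement. You instead run a greedy ``farthest reach'' iteration $x_{i+1}=\phi(x_i)$ for three steps and perturb, reducing everything to the single observation that three arcs cannot cover the circle; your reformulation ``two of the $P_i$ in one arc $\Leftrightarrow$ some closed gap inside one arc'' is clean and the reach characterization of coverable gaps is sound. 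Two remarks. First, your argument only invokes $L(F)>3$, so it actually establishes the lemma (and hence Theorem~\ref{box_circ_greater_4}) under the weaker hypothesis $L_{max}(G)\geq 4$, a small strengthening of the paper's result. Second, the strictness $x\prec\phi(x)$ at the iterates $x_1,x_2$ (which are right endpoints) relies not only on the general position of Lemma~\ref{distinct endpoints} but also on the circle being covered near those points --- otherwise the reach could stall; since you dispose of the uncovered case separately ($L(F)=\infty$ and an uncovered point trivializes the statement), this is a matter of attribution rather than a gap. The price of your approach is the $\epsilon$-perturbation bookkeeping ($\phi(x^{+})\preceq\phi(x)$, no endpoints in $(x_i,x_i^{+}]$), which you correctly flag as the delicate part; the payoff is an explicit construction of all three points and a marginally weaker hypothesis.
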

\begin{proof}
Let $G$ be a circular arc graph having $L_{max}(G)>4$. Let $F= \{\mathcal{A}_1,\mathcal{A}_2,\cdots,
\mathcal{A}_{L_{max}(G)}\}$ be a minimum circular cover of $G$. Let $\mathcal{A}_1,\mathcal{A}_2,\cdots,
\mathcal{A}_{L_{max}(G)}$ be the clockwise ordering of the arcs in $F$ (Recall that the clockwise ordering is defined in definition \ref{clockwise ordering}). Let $l(\mathcal{A}_1)=P_0$ and $r(\mathcal{A}_2)=P_1$. First we will show that $\mathcal{O}(P_0) \cap \mathcal{O}(P_1) = \emptyset$.\\
If possible let $\mathcal{A}' \in \mathcal{O}(P_0) \cap \mathcal{O}(P_1)$.
Then it is easy to see that \textbf{either} we have $\mathcal{A}_1 \subseteq \mathcal{A}'$
and $\mathcal{A}_2 \subseteq \mathcal{A}'$ \textbf{or} $\mathcal{A}'$ should contain the portion of the circle that is not covered by $\mathcal{A}_1 \cup \mathcal{A}_2$. In the former case $\{\mathcal{A}',\mathcal{A}_3,\cdots,\mathcal{A}_{L_{max}(G)}\}$ will be a
circular cover of cardinality $L_{max}(G)-1$, a contradiction. In the latter case $\mathcal{A}_3,\cdots,\mathcal{A}_{L_{max}(G)}$ can be replaced by
$\mathcal{A}'$ in F. So $\{\mathcal{A}_1,\mathcal{A}_2,\mathcal{A}'\}$
will be a circular cover of cardinality $3 < 4 <L_{max}(G)$. Therefore we can infer that $\mathcal{O}(P_0) \cap \mathcal{O}(P_1) = \emptyset$.\\
Next we will show that any arc passing through $P_1$ cannot be clockwise adjacent to any arc passing through $P_0$. If possible let some arc \textbf{x} passing through $P_1$ be clockwise
adjacent to some arc \textbf{y} passing through $P_0$. Then $\{\mathcal{A}_1,\mathcal{A}_2,\textbf{x},\textbf{y}\}$
will be a circular cover of cardinality $4 <L_{max}(G)$, a contradiction.\\
It follows that there exists some
point $P_2$ on the circle such that $\mathcal{O}(P_0) \cap
\mathcal{O}(P_2) = \emptyset$ and $\mathcal{O}(P_1) \cap
\mathcal{O}(P_2) = \emptyset$. Hence the lemma follows.
\end{proof}
\noindent{}We shall construct one interval graph corresponding to each of these three points $P_0$, $P_1$ and $P_2$. Let $\Pi_i$ be the
clockwise ordering of the left and right endpoints of arcs starting from
$P_i$, for $0 \leq i \leq 2$. Note that $\forall \textbf{u} \not \in \mathcal{O}(P_i), \ \Pi_i(l(\textbf{u})) < \Pi_i(r(\textbf{u}))$ and thus $[\Pi_i(l(\textbf{u})),\Pi_i(r(\textbf{u}))]$ is a valid interval on the real line. To construct $I_i$ we map each vertex $u \in V(G)$ to an interval on the real line by the mapping 
\begin{eqnarray*}
  g_i(u) &=& [\Pi_i(l(\textbf{u})), \Pi_i(r(\textbf{u}))] \ \ \ \ \ \ \ \ \ \ if \ \textbf{u} \not \in \mathcal{O}(P_i) \\
  &=& [1, 2n] \ \ \ \ \ \ \ \ \ \ otherwise.
\end{eqnarray*}

\begin{lemma}\label{supgraph1}
For each interval graph $I_i$, $0 \leq i \leq 2$, $E(G) \subseteq
E(I_i)$.
\end{lemma}
\begin{proof}
Let $(u,v) \in E(G)$. We have to consider only
following two cases\\
$\noindent\textbf{Case 1:}$ When \textbf{u} $\not \in \mathcal{O}(P_i)$ and \textbf{v}
$\not \in \mathcal{O}(P_i)$. So either $\Pi_i(l(\textbf{u}))< \Pi_i(l(\textbf{v}))< \Pi_i(r(\textbf{u}))$  or $\Pi_i(l(\textbf{v}))< \Pi_i(l(\textbf{u}))< \Pi_i(r(\textbf{v}))$ which implies that $g_i(u) \cap g_i(v)\neq
\emptyset$.\\
$\noindent\textbf{Case 2:}$ When either \textbf{u} $\in \mathcal{O}(P_i)$ or \textbf{v} $\in
\mathcal{O}(P_i)$. Without loss of generality let $\textbf{u} \in \mathcal{O}(P_i)$. Then note that $g_i(u)=[1,2n]$. Clearly $1 \leq \Pi_i(l(\textbf{v})) \leq \Pi_i(r(\textbf{v})) \leq 2n$. Therefore $g_i(u) \cap g_i(v)\neq
\emptyset$.
\end{proof}

\begin{lemma}\label{missing_edge1}
For any $(u, v) \notin E(G)$, $\exists$ i \ $0 \leq i \leq
2$, such that $(u, v) \notin E(I_i)$.
\end{lemma}
\begin{proof}
In view of Lemma $\ref{missing point}$, at least one of the 3 points $P_0, \ P_1, \ P_2$ is such that both the arcs \textbf{u} and \textbf{v} does not belong to the overlap set of this point. Let $P_i$ be such that $u,v \not \in \mathcal{O}(P_i)$. Without loss of generality let $\Pi_i(l(\textbf{u})) < \Pi_i(l(\textbf{v}))$. Since $(u,v) \not \in E(G)$ we can immediately infer that $\Pi_i(l(\textbf{u})) < \Pi_i(r(\textbf{u})) < \Pi_i(l(\textbf{v})) < \Pi_i(r(\textbf{v}))$. Since in $I_i$, $g_i(u)= [\Pi_i(l(\textbf{u})), \Pi_i(r(\textbf{u}))]$ and $g_i(v)= [\Pi_i(l(\textbf{v})), \Pi_i(r(\textbf{v}))]$ we have $g_i(u) \cap g_i(v) = \emptyset$ and therefore $(u,v) \notin E(I_i)$.
\end{proof}
\noindent{}Combining Lemma $\ref{supgraph1}$ and $\ref{missing_edge1}$ we have the following Theorem.
\begin{theo}\label{box_circ_greater_4}
For a circular arc graph $G$, $box(G) \leq 3$ when maximum circular cover number $L_{max}(G)>4$.
\end{theo}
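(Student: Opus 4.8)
The plan is to prove Theorem \ref{box_circ_greater_4} by exhibiting an interval graph representation of $G$ using exactly three interval graphs, so that the bound $box(G) \leq 3$ follows immediately from Lemma 2.1 (the Roberts equivalence between boxicity and minimum interval graph representations). The entire construction hinges on Lemma \ref{missing point}, which I would establish first: the existence of three points $P_0$, $P_1$, $P_2$ on the circle such that no single arc lies in the overlap set of more than one of them. The condition $L_{max}(G) > 4$ is exactly what makes such a triple of points available, so I expect the proof of Lemma \ref{missing point} to be the main obstacle and the conceptual heart of the argument.

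For Lemma \ref{missing point} itself, I would fix a circular arc family $F$ realizing $L_{max}(G)$ and take a minimum circular cover $\{\mathcal{A}_1, \ldots, \mathcal{A}_{L_{max}(G)}\}$ listed in clockwise order. Setting $P_0 = l(\mathcal{A}_1)$ and $P_1 = r(\mathcal{A}_2)$, the key claim is that $\mathcal{O}(P_0) \cap \mathcal{O}(P_1) = \emptyset$. The argument is a contradiction: any common arc $\mathcal{A}'$ through both points would either swallow $\mathcal{A}_1$ and $\mathcal{A}_2$ (letting us drop one arc from the cover, contradicting minimality) or cover everything outside $\mathcal{A}_1 \cup \mathcal{A}_2$ (yielding a $3$-arc cover, contradicting $L_{max}(G) > 4$). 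A similar counting argument using the cardinality bound $4 < L_{max}(G)$ shows no arc through $P_1$ is clockwise adjacent to any arc through $P_0$, and from this the third point $P_2$ avoiding both overlap sets can be extracted.

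With the three points in hand, the remainder is routine and follows the same template as Theorem \ref{box_general}. For each $P_i$ I would build an interval graph $I_i$ using the clockwise endpoint ordering $\Pi_i$ started at $P_i$: every arc not in $\mathcal{O}(P_i)$ maps to the genuine interval $[\Pi_i(l(\textbf{u})), \Pi_i(r(\textbf{u}))]$, while every arc in $\mathcal{O}(P_i)$ maps to the universal interval $[1, 2n]$ so that it becomes adjacent to everything. Lemma \ref{supgraph1} then verifies $E(G) \subseteq E(I_i)$ by a straightforward case split on whether the endpoints lie in $\mathcal{O}(P_i)$, exactly as in the proof of Theorem \ref{box_general}.

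The final step, Lemma \ref{missing_edge1}, is where Lemma \ref{missing point} pays off: for any non-edge $(u,v) \notin E(G)$, the pigeonhole guarantee that $\textbf{u}$ and $\textbf{v}$ together miss the overlap set of at least one $P_i$ means both map to honest intervals in that $I_i$, and since their arcs are disjoint their intervals are disjoint, so $(u,v) \notin E(I_i)$. Combining $E(G) \subseteq \bigcap_i E(I_i)$ with the fact that every non-edge is killed in some $I_i$ gives $E(G) = E(I_0) \cap E(I_1) \cap E(I_2)$, whence $box(G) \leq 3$ by Lemma 2.1. I expect no difficulty in these last steps; the whole weight of the theorem rests on correctly exploiting $L_{max}(G) > 4$ in Lemma \ref{missing point}.
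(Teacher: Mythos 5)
Your proposal follows the paper's proof essentially verbatim: the same Lemma \ref{missing point} (with the identical choice $P_0=l(\mathcal{A}_1)$, $P_1=r(\mathcal{A}_2)$ and the same two-case contradiction argument), the same three interval graphs built from the clockwise orderings $\Pi_i$ with arcs in $\mathcal{O}(P_i)$ mapped to $[1,2n]$, and the same verification via Lemmas \ref{supgraph1} and \ref{missing_edge1}. The approach is correct and matches the paper.
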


\bibliographystyle{plain}

\begin{thebibliography}{10}

\bibitem{Golu}
Martin~C Golumbic.
\newblock {\em Algorithmic Graph Theory And Perfect Graphs}.
\newblock Academic Press, New York, 1980.

\bibitem{Hadwiger}
Naveen Belkale and L.~Sunil Chandran.
\newblock Hadwiger's conjecture for proper circular arc graphs.
\newblock To appear in European Journal of Combinatorics

\bibitem{Diestel}
Reinhard Deistel.
\newblock Graph Theory.
\newblock {\em volume 173, Springer Verlag, New York, 2 edition,2000}, 97(5):733--744,
  September 2007.

\bibitem{Feinberg}
Robert~B. Feinberg.
\newblock The circular dimension of a graph.
\newblock {\em Discrete mathematics}, 25(1):27--31, 1979.

\bibitem{Scheiner}
E.~R. Scheinerman.
\newblock Intersection classes and multiple intersection parameters.
\newblock Ph. D thesis, Princeton University, 1984.

\bibitem{Yan1}
Mihalis Yannakakis.
\newblock The complexity of the partial order dimension problem.
\newblock {\em {SIAM} Journal on Algebraic Discrete Methods}, 3:351--358, 1982.

\bibitem{Roberts}
F.~S. Roberts.
\newblock {\em Recent Progresses in Combinatorics}, chapter On the boxicity and
  Cubicity of a graph, pages 301--310.
\newblock Academic Press, New York, 1969.

\bibitem{Kratochvil}
J.~Kratochvil.
\newblock A special planar satisfiability problem and a consequence of its
  {NP}--completeness.
\newblock {\em Discrete Applied Mathematics}, 52:233--252, 1994.

\bibitem{Cozzens}
M.B.Cozzens and F.S.Roberts.
\newblock Computing the boxicity of a graph by covering its complement by cointerval graphs.
\newblock {\em Discrete Applied Mathematics}, 6:217--228, 1983.

\bibitem{freeman}
L.C.Freeman.
\newblock Spheres, cubes and boxes: graph dimensionality and network structure.
\newblock {\em Social Networks 5}, 139-156, 1983.

\bibitem{opsut}
R.J. Opsut and F.S. Roberts.
\newblock On the fleet maintainence, mobile radio frequency, task assignment, and traffic phasing problems in G.
\newblock {\em The Theory and Applications of Graphs, Wiley New York}, 479-492, 1981.

\bibitem{Roberts1}
F.~S. Roberts.
\newblock {\em Discrete mathematical models with applications to Social, Biological and Environmental Problems.}
\newblock Prentice-Hall, Englewod Cliffs, New Jersey, 1976.

\bibitem{Roberts2}
M.B. Cozzens and F.~S. Roberts.
\newblock {\em Computing the boxicity of a graph by covering its complement by cointerval graphs.}
\newblock Discrete Applied Math. 6 (1983) 217-228.

\bibitem{Thomassen}
C. Thomassen.
\newblock {\em Interval representation of planar graphs.}
\newblock J. Combin. Theory Ser. B 40 (1986) 9-20.

\bibitem{treewidth}
L.S. Chandran and N.Sivdasan.
\newblock {\em Boxicity and treewidth.}
\newblock J. Combin. Theory Ser. B 97(5) (2007) 733-744.

\bibitem{vertexcover}
L.S. Chandran, Anita Das and Chintan D. Shah.
\newblock {\em Cubicity, boxicity and vertex cover.}
\newblock DOI 10.1016/j.disc.2008.06.003

\bibitem{maxdegree}
L.S. Chandran, M.C. Francis and N. Sivdasan.
\newblock {\em Boxicity and maximum degree.}
\newblock J. Combin. Theory Ser. B 98 (2008) 443-445.

\bibitem{esperet}
Louis Esperet.
\newblock {\em Boxicity of graphs with bounded degree.}
\newblock To appear in Europian Journal of Combinatorics.

\bibitem{geometric}
L.S. Chandran and N. Sivdasan.
\newblock {\em Geometric representation of graphs in low dimension using axis parallel boxes.}
\newblock Algorithmica DOI 10.1007/s00453-008-9163-5

\bibitem{tucker}
A. Tucker.
\newblock {\em Coloring a family of circular arcs.}
\newblock SIAM J. Appl. Math., 29(1975), pp. 493-502.

\end{thebibliography}

\end{document}